%
%
%
%
%

\documentclass{article}
\usepackage{amsthm,amsmath,amsfonts,latexsym,amssymb,mathrsfs,color,changepage,graphicx}

\usepackage{amsfonts}
\usepackage{amssymb}
\usepackage{graphicx}
\usepackage{pstricks}
\usepackage{amsmath}
\usepackage{amsxtra}
\usepackage{bm}
\usepackage{enumitem}
\usepackage{marginnote}
\usepackage[normalem]{ulem}

\includeonly{preface,chap1,biblio,index}

\makeatletter
\newcommand* \bigcdot{\mathpalette \bigcdot@{.5}}
\newcommand* \bigcdot@[2]{\mathbin{\vcenter{\hbox{\scalebox{#2}{$\m@th#1\bullet$}}}}}
\makeatother


\makeatletter
\newcommand\appendix@section[1]{%
 \refstepcounter{section}%
 \orig@section*{Appendix \@Alph\c@section: #1}%
 \addcontentsline{toc}{section}{Appendix \@Alph\c@section: #1}%
}
\let\orig@section\section
\g@addto@macro\appendix{\let\section\appendix@section} \makeatother


\theoremstyle{definition}
\newtheorem{Def}{Definition}[section]
\newtheorem{df}[Def]{Definition}
\newtheorem{thm}[Def]{Theorem}
\newtheorem{cor}[Def]{Corollary}
\newtheorem{lem}[Def]{Lemma}

\newtheorem{ex}[Def]{Example}
\newtheorem{q}[Def]{Question}

\renewcommand{\thefootnote}{\fnsymbol{footnote}}

\usepackage{multicol}

\newrgbcolor{lightorange}{1 .5 0}
\newrgbcolor{lightgreen}{0.8 1 .8}
\newrgbcolor{lightyellow}{1 1 0}
\newrgbcolor{lightblue}{0 1 1}

\definecolor{DarkBlue}{rgb}{0,0.2,0.6}
\definecolor{PinkPurple}{rgb}{0.8,0.3,0.3}
\definecolor{darkgreen}{rgb}{.1,.5,0}
\definecolor{brown}{rgb}{.4,.2,.1}

\pagestyle{headings}

\topmargin 1cm 
\textwidth 13.6cm 
\textheight 20cm 
\topskip 1cm \headheight 0cm
\oddsidemargin 1.2cm 
\evensidemargin 1.2cm 
\footskip 1cm

\begin{document}

\title
{Operator-valued rational functions}

\author{Ra{\'u}l\ E.\ Curto, In Sung Hwang and Woo Young Lee}

\date{}

\maketitle

%
%
%
%

\noindent{\bf Abstract.} In this paper we show that every inner
divisor of the operator-valued coordinate function, $zI_E$, is a Blaschke-Potapov
factor. We also introduce a notion of operator-valued ``rational"
function and then show that $\Delta$ is two-sided inner and
rational if and only if it can be represented as a finite
Blaschke-Potapov product; this extends to operator-valued functions the well-known result proved by V.P. Potapov for
matrix-valued functions.


\bigskip

\setcounter{page}{1}


\renewcommand{\thefootnote}{}
\footnote{\noindent 2010 \textit{Mathematics Subject
Classification.} Primary 46E40, 30H10, 30J05, 47B35, 47B20.
\\
\smallskip
\indent\textit{Key words.} Inner functions, inner divisors,
Blaschke-Potapov factor, operator-valued rational functions.
\\
\smallskip
}

%
%
%
%

\section{Introduction}

Many properties of matrix-valued functions may not be transferred to operator-valued functions, since some properties of finite matrices
are destined to fail for infinite matrices. \ For example, if $A$ is an
$n\times n$ matrix of $H^2$-functions and $B$ is an $n\times n$
diagonal constant inner function, i.e.,
$B=\hbox{diag}\,(\theta,\cdots,\theta)$ (for $\theta$ an inner
function), then $A$ and $B$ are left coprime if and only if $A$ and
$B$ are right coprime; in other words, left-coprimeness and
right-coprimeness coincide for $A$ and $B$ (cf. \cite[Lemma
C.14]{CHL3}). \ However this is no longer true for operator-valued
functions. \ For example, if $A(z):=S$ (the shift on $\ell^2$)
and $B(z):=\theta I$ (where $I$ is the identity operator on $\ell^2$), then
$A$ and $B$ are right coprime, but not left coprime (cf.
\cite[Example C.12]{CHL3}).

In this paper we consider a question on left inner divisors of the
``operator-valued coordinate" function $zI_E$ (where $E$ is a Hilbert space). \ We consider the
well-known result, proved by V.P. Potapov \cite{Po}, that every rational inner $n\times n$ matrix-valued
function can be written as a finite Blaschke-Potapov product. \ We extend this result to the case
of operator-valued functions.

Let $X$ be a complex Banach space and $\mathbb T$ denote the unit
circle in the complex plane $\mathbb C$. \ For $1\leq p<\infty$, let
$L^p(\mathbb T, X)$ be the linear space of all (equivalence classes
of) strongly measurable functions $f:\mathbb T \rightarrow X$ for
which
$$
\int_{\mathbb T}||f||^pdm <\infty,
$$
where $m$ is the normalized Lebesgue measure on $\mathbb T$. \ We
define $L^{\infty}(\mathbb T, X)$ as the linear space of all
(equivalence classes of) strongly measurable functions $f:\mathbb T
\rightarrow X$ for which there exists $r>0$ such that
$m\bigl(\bigl\{z \in \mathbb T:||f(z)||>r\bigr\}\bigr)=0$. \ Endowed
with the norms
$$
||f||_{L^p(\mathbb T, X)}:=\left(\int_{\mathbb T}||f||^p
dm\right)^{\frac{1}{p}}
$$
and
$$
||f||_{L^{\infty}(\mathbb T, X)}:=\inf  \bigl\{ r>0 : m(\{z\in
\mathbb T:||f(z)||>r\})=0 \bigr\},
$$
the spaces $L^p(\mathbb T, X)$ are complex Banach spaces ($1 \le p \le \infty$). \ For $f \in
L^1(\mathbb T, X)$, the $n$-th Fourier coefficient of $f$, denoted
by $\widehat{f} (n)$, is defined by
$$
\widehat{f}(n):=\int_{\mathbb T}\overline z^nf(z)\, dm(z)\quad
\hbox{for each $n\in\mathbb Z$},
$$
where the integral is understood in the sense of the Bochner
integral. \ For $1\le p \leq\infty$, define
$$
H^p(\mathbb T, X):=\bigl\{f\in L^p(\mathbb T, X): \widehat f(n)=0\ \
\hbox{for}\ n<0 \bigr\}.
$$
Let $\mathcal B(D,E)$ denote the set of all bounded linear operators
between complex Hilbert spaces $D$ and $E$ and abbreviate $B(E,E)$
to $B(E)$. \ For $1\le p\leq\infty$, let $L^p_s(\mathbb T, \mathcal
B(D,E))$ be the set of all (SOT-measurable) $\mathcal B(D,E)$-valued
functions $\Phi$ on $\mathbb T$ such that $\Phi(\cdot)x\in
L^p(\mathbb T, E)$ for each $x\in D$. \ A function $\Phi\in
L^p_s(\mathbb T, \mathcal B(D,E))$ is called a {\it strong
$L^p$-function}. \ We can see that every function in $L^p(\mathbb T,
B(D,E))$ is a strong $L^p$-function. \ The notion of strong $L^2$-function was introduced by V. Peller in \cite{Pe}; the formal theory of strong
$L^2$-functions was developed in \cite{CHL3}.

If $\Phi\in L^1_s (\mathbb T, \mathcal B(D,E))$ and $x \in D$, then
$\Phi(\cdot)x\in L^1_E$. \ The $n$-th Fourier coefficient of $\Phi\in
L^1_s (\mathbb T, \mathcal B(D,E))$, denoted by $\widehat \Phi (n)$,
is given by
$$
\widehat\Phi(n)x:=\widehat{\Phi(\cdot)x}(n) \quad (n\in\mathbb Z, \
x\in D).
$$
We define
$$
H^p_s(\mathbb T, \mathcal B(D,E)) :=\bigl\{\Phi\in L^p_s(\mathbb T,
\mathcal B(D,E)) : \ \widehat\Phi(n)=0\ \hbox{for} \ n<0\bigr\}.
$$
Let $L^{\infty}(\mathbb T, \mathcal B(D,E))$ be the space of all
bounded SOT-measurable $\mathcal B(D,E)$-valued functions on
$\mathbb T$. \ Then we can easily see that
\begin{equation}\label{poiu}
L^{\infty}(\mathbb T, \mathcal B(D,E))=L^{\infty}_s(\mathbb T,
\mathcal B(D,E)).
\end{equation}
Indeed, obviously $L^{\infty}(\mathbb T, \mathcal B(D,E)) \subseteq
L^{\infty}_s(\mathbb T, \mathcal B(D,E))$. \ For the reverse
inclusion, suppose $\Phi\in L^{\infty}_s(\mathbb T, \mathcal
B(D,E))$. \ Then $\Lambda: x \mapsto \Phi(\cdot)x$ is a closed linear
transform from $X$ into $L^{\infty}(\mathbb T, E)$, so that, by the
closed graph theorem, it is bounded. \ Thus for almost all $z \in
\mathbb T$,
$$
||\Phi(z)||_{\mathcal B(D,E))}=\sup_{||x||= 1}||\Phi(z)x||_{E}\leq
||\Lambda||,
$$
which implies $\Phi \in L^{\infty}(\mathbb T, \mathcal B(D,E))$. \ This proves (\ref{poiu}). \ We
will also write $H^{\infty}(\mathbb T, \mathcal B(D,E))\equiv
H^{\infty}_s(\mathbb T, \mathcal B(D,E))$.

\medskip

Write $I_E$ for the identity operator acting on $E$. \ Write
$M_{m\times n}$ for the set of $m\times n$ complex matrices and
abbreviate $M_{n\times n}$ to $M_n$. \ Also we abbreviate $I_{M_n}$ to
$I_n$. \ We say that a function $\Delta \in H^{\infty}(\mathbb T,
\mathcal B(D, E))$ is an {\it inner} function  if
$$
\Delta^*\Delta= I_D\ \ \hbox{a.e. on}\ \mathbb T
$$
and that $\Delta$ is a {\it two-sided inner} function  if
$\Delta\Delta^*=I_E$ a.e. on $\mathbb T$ and $\Delta^*\Delta=I_D$
a.e. on $\mathbb T$. \ For a function $\Phi\in H^\infty(\mathbb
T,\mathcal B(D,E))$, we say that an inner function $\Delta\in
H^\infty(\mathbb T, \mathcal B(D^{\prime},E))$ is a {\it left inner
divisor} of $\Phi$ if $\Phi=\Delta A$ for some $A\in
H^\infty(\mathbb T, \mathcal B(D,D^\prime))$ and that $\Omega \in
H^\infty(\mathbb T, \mathcal B(D,E^{\prime}))$ is an {\it right
inner divisor} of $\Phi$ if $\Phi=B \Omega$ for some $B\in
H^\infty(\mathbb T, \mathcal B(E^\prime, E))$. \ A function $\Delta$
is an {\it inner divisor} of $\Phi$ if it is both a left and a right
inner divisor of $\Phi$. \ As customarily done, we say that two inner
functions $A,B\in H^\infty(\mathbb T, B(E))$ are equal if they are
equal up to a unitary constant right factor, i.e., there exists a
unitary (constant) operator $V\in \mathcal B(E)$ such that $A=BV$.

\medskip

\medskip

Note that if $V$ is a unitary operator in $B(E)$, then for every
$\Phi \in H^{\infty}(\mathbb T, \mathcal B(E))$,
$$
\Phi=V(V^*\Phi)=(\Phi V^*)V,
$$
which implies that $V$ is an inner divisor of $\Phi$.

\medskip

For a function $\Phi\in H^\infty(\mathbb T,B(E))$, we say that a
function $\Delta\in H^\infty(\mathbb T, B(E))$ is a {\it nontrivial
left} (resp. {\it right}) {\it inner divisor} of $\Phi$ if $\Delta$
is a non-unitary operator and is a left (right, resp.) inner divisor
of $\Phi$.

\medskip

For $\alpha\in\mathbb D$, write
$$
b_\alpha(z)\label{blambdaz}:=\frac{z-\alpha}{1-\overline \alpha z},
$$
which is called a {\it Blaschke factor}. \  If $M$ is a closed
subspace of a Hilbert space $E$, then a function of the form
$$
b_\alpha P_M+(I_E-P_M)
$$
is called a  ({\it operator-valued}) {\it Blaschke-Potapov factor},
where $P_M$ is the orthogonal projection of $E$ onto $M$. \ A function
$D$ is called  a ({\it operator-valued}) {\it finite
Blaschke-Potapov product} if $D$ is of the form
$$
D =V \prod_{m=1}^M \Bigl(b_m P_m + (I-P_m)\Bigr),
$$
where $V$ is a unitary operator, $b_m$ is a Blaschke factor, and
$P_m$ is an orthogonal projection in $E$ for each $m=1,\cdots, M$.
In particular, a scalar-valued function $D$ reduces to a finite
Blaschke product $D=\nu \prod_{m=1}^M b_m$, where $\nu=e^{i\omega}$.
\  It is known (cf. \cite{Po}) that an $n\times n$ matrix function
$D$ is rational and inner if and only if it can be represented as a
finite Blaschke-Potapov product. \

On the other hand, we may ask a question: What is a left inner
divisor of $zI_n$\,? For this question, we may guess that each left
inner divisor of $zI_n$ is a Blaschke-Potapov factor. \ More
specifically, we wonder if a left inner divisor of
$\left[\begin{smallmatrix} z&0\\0&z\end{smallmatrix}\right]\equiv
zI_2$ should be of the following form up to a unitary constant right
factor (also up to unitary equivalence):
$$
\begin{bmatrix} 1&0\\ 0&1\end{bmatrix}, \quad \begin{bmatrix} 1&0\\ 0&z\end{bmatrix}, \quad \begin{bmatrix} z&0\\ 0&1\end{bmatrix}\quad\hbox{or}\quad
\begin{bmatrix} z&0\\ 0&z\end{bmatrix}.
$$
For example, $A\equiv \frac{1}{\sqrt{2}}\left[\begin{smallmatrix} 1&-z\\
1&z\end{smallmatrix}\right]$ is a left inner divisor of
$\left[\begin{smallmatrix} z&0\\0&z\end{smallmatrix}\right]\equiv
zI_2$: indeed,
$$
A\cdot
\frac{1}{\sqrt{2}}\begin{bmatrix}
z&z\\-1&1\end{bmatrix}=\begin{bmatrix}z&0\\ 0&z\end{bmatrix}.
$$
In this case, if we take a unitary operator $V:=
\frac{1}{\sqrt{2}}\left[\begin{smallmatrix}
1&1\\-1&1\end{smallmatrix}\right]$, then
$$
\begin{bmatrix} 1&0\\ 0&z\end{bmatrix}
=V\cdot \frac{1}{\sqrt{2}}\begin{bmatrix} 1&-z\\1&z\end{bmatrix} =
\left[V\cdot \frac{1}{\sqrt{2}}\begin{bmatrix}
1&-z\\1&z\end{bmatrix}\cdot V^*\right]\cdot V.
$$
In fact, it was shown in \cite[Lemma 2.5]{CHL1} that
\begin{equation}\label{mq}
\hbox{every left inner divisor of $zI_{n} \in H^\infty(\mathbb T,
M_n)$ is a Blaschke-Potapov factor.}
\end{equation}
This fact is useful for the study of coprime-ness of functions (cf.
\cite{CHL2}). \ In \cite[p.23]{CHL3}, the authors asked:
\smallskip

\begin{q}\label{Q1.1} Is the statement in (\ref{mq}) still true for
operator-valued functions\,?
\end{q}
\smallskip

We will call a function of the form $z I_E$ the operator-valued coordinate function. \ This allows us to rephrase Question
\ref{Q1.1} as follows: Is every left inner divisor of the
operator-valued coordinate function a Blaschke-Potapov factor\,?
 In
Section 2, we give an affirmative answer to this question. \ In
Section 3, we introduce a notion of operator-valued ``rational"
function and then show
 that $\Delta$ is two-sided inner and rational if and only if
it can be represented as a finite Blaschke-Potapov product, which
extends the well-known result for the matrix-valued functions due to
V.P. Potapov \cite{Po}. \ In Section 4, we consider coprime
operator-valued rational functions. \ Lastly, in Section 5, we take a
glance at right coprime-ness and subnormality of Toeplitz operators,
aiming at shedding new light on the differences between matrix-valued
functions and operator-valued functions.

\medskip

To proceed, we give an elementary observation.

 If $\dim E<\infty$ and  $\Theta \in
H^{\infty}(\mathbb T, \mathcal B(E))$ is a two-sided inner function,
then any left inner divisor of $\Theta$ is two-sided inner (cf.
\cite[Lemma 4.10]{CHL2}). \ We can say more:

\medskip

\begin{lem}\label{sxfveryhnt} \ (cf. \cite[Lemma 2.2]{CHL3}) \ Let $\Theta\in H^{\infty}(\mathbb T, \mathcal B(E))$
 be a two-sided inner function. \ If $\Delta
\in H^{\infty}(\mathbb T, \mathcal B(E))$ is a left inner divisor of
$\Theta$, then $\Delta$ is two-sided inner.
\end{lem}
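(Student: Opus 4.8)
The plan is to argue pointwise on the circle. By the definition of a left inner divisor I would write $\Theta = \Delta A$ for some $A \in H^{\infty}(\mathbb{T}, \mathcal{B}(E))$; since $\Theta$, $\Delta$ and $A$ all lie in $H^{\infty}(\mathbb{T}, \mathcal{B}(E))$, this factorization is an identity of pointwise products, so $\Theta(z) = \Delta(z)A(z)$ for a.e.\ $z \in \mathbb{T}$. The hypothesis that $\Theta$ is two-sided inner says precisely that $\Theta(z)^{*}\Theta(z) = I_{E}$ and $\Theta(z)\Theta(z)^{*} = I_{E}$ for a.e.\ $z$, i.e.\ that $\Theta(z)$ is a unitary operator on $E$ for a.e.\ $z$; in particular $\Theta(z)$ is surjective for a.e.\ $z$.

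Next I would use the factorization to transfer the surjectivity from $\Theta$ to $\Delta$: for a.e.\ $z$ one has $E = \operatorname{ran}\Theta(z) = \Delta(z)\bigl(A(z)E\bigr) \subseteq \operatorname{ran}\Delta(z) \subseteq E$, whence $\operatorname{ran}\Delta(z) = E$, so $\Delta(z)$ is onto for a.e.\ $z$. On the other hand $\Delta$ is inner, so $\Delta(z)^{*}\Delta(z) = I_{E}$, i.e.\ $\Delta(z)$ is an isometry, for a.e.\ $z$. Since a surjective isometry of a Hilbert space is automatically unitary, we get $\Delta(z)\Delta(z)^{*} = I_{E}$ for a.e.\ $z$, which is exactly the assertion that $\Delta$ is two-sided inner. (As a byproduct the complementary factor $A$ is two-sided inner as well: injectivity of $\Delta(z)$ together with $\Delta(z)(A(z)E) = E = \Delta(z)E$ forces $A(z)E = E$, while $A(z)^{*}A(z) = A(z)^{*}\Delta(z)^{*}\Delta(z)A(z) = \Theta(z)^{*}\Theta(z) = I_{E}$.)

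The only delicate point --- and the reason the finite-dimensional argument behind \cite[Lemma 4.10]{CHL2} does not carry over verbatim --- is the step from ``$\Delta(z)$ isometric'' to ``$\Delta(z)$ invertible''. For matrices this is a dimension count, but in infinite dimensions an inner operator function may be pointwise an isometry while never pointwise surjective (take $\Delta(z) \equiv S$, the unilateral shift). What rescues the argument is that $\Delta$ is not an arbitrary inner function here but a left inner divisor of the two-sided inner $\Theta$, so the surjectivity of $\Theta(z)$ is pushed down onto $\Delta(z)$ through the factorization. Apart from this observation, the proof uses only the elementary Hilbert-space fact that a surjective isometry is unitary, together with routine bookkeeping about a.e.\ pointwise operator identities.
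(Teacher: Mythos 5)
Your argument is correct and is essentially the same as the paper's: the paper likewise writes $\Theta=\Delta\Omega$, deduces that $\Delta(z)$ is surjective a.e.\ from the surjectivity of $\Theta(z)$, and concludes that the surjective isometry $\Delta(z)$ is unitary a.e., hence $\Delta$ is two-sided inner. Your version merely spells out the pointwise bookkeeping (and the byproduct for $\Omega$) that the paper leaves implicit.
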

\begin{proof} If $\Delta
$ is a left inner divisor of $\Theta$, we may write
$$
\Theta=\Delta \Omega \quad \hbox{for some} \ \Omega \in
H^{\infty}(\mathbb T, \mathcal B(E)).
$$
Thus $\Delta$ is surjective, and hence unitary a.e. on $\mathbb T$.
Thus $\Delta$ is two-sided inner.
\end{proof}

In particular, if $\Theta:=\theta I_E$ (for $\theta$ a scalar inner
function) then every left inner divisor of $\Theta$ is an inner
divisor of $\Theta$. \ However, in general, a left inner divisor of a
two-sided inner function need not be its right inner divisor. \ To see
this, we first observe:

\medskip

\begin{lem}\label{dcvgbhnlkjn} \ Let $\Theta\in H^{\infty}(\mathbb T, \mathcal B(E))$
 be a two-sided inner function and $\Delta
\in H^{\infty}(\mathbb T, \mathcal B(E))$ be a left inner divisor of
$\Theta$. \  Then $\Delta$ is an inner divisor of $\Theta$ if and only
if $\Theta\Delta^*\in H^{\infty}(\mathbb T, \mathcal B(E))$.
\end{lem}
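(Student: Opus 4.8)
The plan is to prove both implications directly from the definitions, using Lemma~\ref{sxfveryhnt} to upgrade ``left inner divisor'' to ``two-sided inner.'' For the forward direction, I would assume that $\Delta$ is an inner divisor of $\Theta$; in particular it is a right inner divisor, so that $\Theta=B\Delta$ for some $B\in H^{\infty}(\mathbb T,\mathcal B(E))$. By Lemma~\ref{sxfveryhnt}, $\Delta$ is two-sided inner, hence $\Delta\Delta^*=I_E$ a.e.\ on $\mathbb T$; multiplying $\Theta=B\Delta$ on the right by $\Delta^*$ then gives $\Theta\Delta^*=B\Delta\Delta^*=B$ a.e.\ on $\mathbb T$, and since $B\in H^{\infty}$ we conclude $\Theta\Delta^*\in H^{\infty}(\mathbb T,\mathcal B(E))$.

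For the reverse direction, I would set $B:=\Theta\Delta^*$ and assume $B\in H^{\infty}(\mathbb T,\mathcal B(E))$. Since $\Delta$ is a left inner divisor of $\Theta$ it is in particular inner, so $\Delta^*\Delta=I_D=I_E$ a.e.\ on $\mathbb T$; hence $B\Delta=\Theta\Delta^*\Delta=\Theta$ a.e., i.e.\ $\Theta=B\Delta$ with $B\in H^{\infty}$ and $\Delta$ inner. This exhibits $\Delta$ as a right inner divisor of $\Theta$, and combined with the standing hypothesis that $\Delta$ is a left inner divisor, $\Delta$ is an inner divisor of $\Theta$.

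I do not expect a serious obstacle here: once Lemma~\ref{sxfveryhnt} supplies the two-sidedness of $\Delta$, the statement is essentially a restatement of the definition of a right inner divisor. The only points deserving a word of care are that the pointwise a.e.\ identities on $\mathbb T$ are genuine equalities in $L^{\infty}_s(\mathbb T,\mathcal B(E))$ (so that membership in $H^{\infty}$ transfers freely between $\Theta\Delta^*$ and $B$), and that no extra regularity of the cofactor $B$ beyond lying in $H^{\infty}$ is required by the definition of a right inner divisor.
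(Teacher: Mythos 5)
Your proof is correct and follows essentially the same route as the paper: the forward direction is verbatim the paper's argument (write $\Theta=\Psi\Delta$, use two-sidedness of $\Delta$ from Lemma~\ref{sxfveryhnt} to get $\Theta\Delta^*=\Psi$), and your reverse direction merely spells out the step the paper dismisses as ``obvious'' ($\Delta^*\Delta=I_E$ gives $\Theta=(\Theta\Delta^*)\Delta$). No gaps.
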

\begin{proof} Let $\Theta\in H^{\infty}(\mathbb T, \mathcal B(E))$
 be a two-sided inner function and $\Delta
\in H^{\infty}(\mathbb T, \mathcal B(E))$ be a left inner divisor of
$\Theta$. \ Then, by Lemma \ref{sxfveryhnt}, $\Delta$ is two-sided
inner and we may write
$$
\Theta=\Delta\Omega \quad(\Omega \in H^{\infty}(\mathbb T, \mathcal
B(E))).
$$
Suppose that $\Delta$ is an inner divisor of $\Theta$. \ Then we can also
write $\Theta=\Psi \Delta$ for some $\Psi \in H^{\infty}(\mathbb T, \mathcal
B(E)))$. \ Thus we have that
$\Theta\Delta^*=\Psi \Delta \Delta^*=\Psi \in H^{\infty}(\mathbb T,
\mathcal B(E))$.
The converse is obvious.
\end{proof}

\medskip

We then have:

\begin{ex} Let $\{e_n: n \in \mathbb Z\}$ be the canonical orthonormal basis for
$L^2(\mathbb T)$. \ Define $\Delta$ and $\Theta$ in
$H^{\infty}(\mathbb T, \mathcal B(L^2(\mathbb T)))$ by
$$
\Delta(z)e_n:=\begin{cases} e_{n+1}z \quad \hbox{if} \ n \geq 0\\
e_{n+1} \quad  \ \ \hbox{if} \ n < 0
\end{cases} \qquad \hbox{and} \qquad \Theta(z)e_n:=\begin{cases} e_{-n+1}z^2 \quad \hbox{if} \ n \leq 1\\
e_{-n+1} \quad  \quad   \hbox{if} \ n > 1.
\end{cases}
$$
Then $\Theta$ and $\Delta$ are two-sided inner. \ Observe that
$$
\Delta^*(z)e_n=\begin{cases} e_{n-1}z^{-1} \quad \hbox{if} \ n \geq 1\\
e_{n-1} \quad  \quad \ \ \hbox{if} \ n < 1\end{cases} \ \hbox{and
hence} \quad
\Delta^*(z)\Theta(z)e_n=\begin{cases} e_{-n}z \quad \quad \hbox{if} \ n < 1\\
e_{-1}z^2\quad  \ \ \hbox{if} \ n=1\\ e_{-n} \quad \quad \ \
\hbox{if} \ n>1.
\end{cases}
$$
Thus $\Delta$ is a left inner divisor of $\Theta$. \ Since $
\Theta(z)\Delta^*(z)e_3=e_{-1}z^{-1}$, it follows from Lemma
\ref{dcvgbhnlkjn} that $\Delta$ is not a right inner divisor of
$\Theta$.
\end{ex}

\medskip

On the other hand, Lemma \ref{sxfveryhnt} may fail if ``left" is replaced by
``right."

\medskip

\begin{ex} Let $S$ be the shift operator on $H^2(\mathbb T)$ defined by
$$
(Sf)(z):=zf(z) \quad (f \in H^2(\mathbb T), \ z \in \mathbb T)
$$
and let $\Delta(z):=S \in H^{\infty}(\mathbb T, \mathcal
B(H^2(\mathbb T)))$. \ Then
$$
\Delta(z)^*\Delta(z)= S^*S=I,
$$
which implies that $\Delta$ is a right inner divisor of (a two-sided
inner function) $I$. \ But $\Delta$ is not two-sided inner. \ By Lemma
\ref{sxfveryhnt}, $\Delta$ is not a left inner divisor of $I$.
\end{ex}

\bigskip


\section[Inner divisors of coordinate functions]{Inner divisors of the operator-valued coordinate functions}

\medskip

For a complex Banach space $X$ and an open subset $G$ of $\mathbb
C$, a function $A: G \rightarrow X$ is called holomorphic if, in any
sufficiently small disc $D(\lambda,r)=\{\zeta : |\zeta-\lambda|<r\}
\subset G$, $A$ is the sum of convergent power series
$$
A(\zeta)=\sum_{n=0}^{\infty}\widehat{A}(n)(\zeta-\lambda)^n \quad
 (\widehat{A}(n) \in X).
$$
Denote by $\hbox{Hol}(G, X)$ the space of all holomorphic functions
$A:G \rightarrow X$.
Let us associate to any function $f$ on $\mathbb D$ a family of
function $f_r$ on $\mathbb T$, defined by
$$
f_r(z):=f(rz) \quad (0\leq  r <1).
$$
For $1\leq p <\infty$, let $H^p(\mathbb D, X)$ be the set of all
functions $f \in \hbox{Hol}(\mathbb D, X)$ satisfying
$$
||f||_{H^p(\mathbb D, X)}:=\biggl(\sup_{0<r<1}\int_{\mathbb
T}||f_r||^pdm \biggr)^{\frac{1}{p}}<\infty.
$$
We define  $H^{\infty}(\mathbb D, X)$ be the set of all bounded
functions $f \in \hbox{Hol}(\mathbb D, X)$.

\medskip

If $\Phi \in L^{\infty}(\mathbb T, \mathcal{B}(D,E))$ and $\zeta = r e^{i\theta} \in \mathbb{D}$, let $P[\Phi]$ denote the Poisson integral
of $f$ defined by
\begin{equation} \label{Poisson}
P[\Phi](\zeta)x:=\frac{1}{2\pi}\int_{-\pi}^{\pi}P_r(\theta-t)\Phi(e^{it})x \, dt \quad(x \in D; \, 0\leq r<1).
\end{equation}
It is known (cf. \cite[Lemma 2.1]{GHKL}) that if $\Phi \in
H^{\infty}(\mathbb T, \mathcal B(D, E))$ and $\Psi \in
H^{\infty}(\mathbb T, \mathcal B(D^{\prime}, D))$, then
\begin{equation}\label{lemma4.treghc1000001211}
P[\Phi \Psi]=P[\Phi]P[\Psi].
\end{equation}

\medskip

We now consider Question \ref{Q1.1}. \ We actually wish to study a more general case, and
we first observe that
if $A$ is an inner divisor of $z^NI_E$, then there exists a function
$\Omega \in H^{\infty}(\mathbb T, \mathcal{B}(E))$ such that
$$
A \Omega=\Omega A=z^NI_E \quad \hbox{a.e. on} \ \mathbb T,
$$
so that
$$
A^* z^N I_E=\Omega \in H^{\infty}(\mathbb T, \mathcal{B}(E)),
$$
which implies that $A$ is a polynomial of degree at most $N$.

\medskip

We are ready for:

\begin{thm}\label{asdcvfgb,mnbv} Let $A$ be a polynomial of degree
$N$. \ Then the following are equivalent:
\begin{itemize}
\item[(a)] $A$ is an inner divisor of $z^NI_E$;
\item[(b)] $\widehat{A}(n)$ and $\widehat{A}(n)^*$ are partial isometries for each
$n=0,1,\cdots, N$, and
$$
E=\bigoplus_{n=0}^{N} \hbox{ran}\,
\widehat{A}(n)=\bigoplus_{n=0}^{N} \hbox{ran}\, \widehat{A}(n)^*;
$$
\item[(c)] $A$ is a finite Blaschke-Potapov product of the form
$$
A(z)=V\prod_{m=1}^{N}\Bigl(z P_m + (I-P_m)\Bigr),
$$
where $P_m$ is the orthogonal projection from $E$ onto
$\bigoplus_{n=m}^N\hbox{ran}\, \widehat{A}(n)^*$, and
$V:=\hbox{diag}\bigl(\widehat{A}(0)|_{\hbox{ran}\,
\widehat{A}(0)^*}, \widehat{A}(1)|_{\hbox{ran}\, \widehat{A}(1)^*},
\cdots, \widehat{A}(N)|_{\hbox{ran}\, \widehat{A}(N)^*}\bigr)$.
\end{itemize}
\end{thm}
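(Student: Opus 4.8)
The plan is to prove the cycle of implications $(c)\Rightarrow(a)\Rightarrow(b)\Rightarrow(c)$, of which $(a)\Rightarrow(b)$ carries essentially all of the difficulty. For $(c)\Rightarrow(a)$, note that on $\mathbb{T}$ each factor satisfies $(zP_m+(I-P_m))^*(zP_m+(I-P_m)) = \overline{z}zP_m+(I-P_m)=I_E$ and likewise $(zP_m+(I-P_m))(zP_m+(I-P_m))^*=I_E$, so each factor---and hence $A$, together with the constant unitary $V$---is two-sided inner. Thus $A$ is a two-sided inner polynomial of degree $N$, $z^N A^*$ is an analytic polynomial (its negative Fourier coefficients vanish because $\deg A\le N$), and $A(z^N A^*)=(z^N A^*)A=z^N I_E$, exhibiting $A$ as an inner divisor of $z^N I_E$.

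For $(b)\Rightarrow(c)$: granted $(b)$, the subspaces $\mathrm{ran}\,\widehat{A}(n)^*$ ($0\le n\le N$) are mutually orthogonal with orthogonal sum $E$, so the projections $P_m$ of $(c)$ are well defined and nested, $P_1\ge P_2\ge\cdots\ge P_N$. Nested projections commute, so a computation on the common eigenspaces gives $\prod_{m=1}^N(zP_m+(I-P_m))=\sum_{j=0}^N z^j R_j$, where $R_j:=P_j-P_{j+1}$ (with $P_0:=I_E$, $P_{N+1}:=0$) is the orthogonal projection onto $\mathrm{ran}\,\widehat{A}(j)^*$. It remains to check $\widehat{A}(j)=VR_j$: on $\mathrm{ran}\,\widehat{A}(j)^*$ this is exactly the defining block $\widehat{A}(j)|_{\mathrm{ran}\,\widehat{A}(j)^*}$ of $V$, and on $(\mathrm{ran}\,\widehat{A}(j)^*)^\perp=\ker\widehat{A}(j)$ both sides vanish; and $V$ is unitary, being block-diagonal with $j$-th block the unitary $\widehat{A}(j)|_{\mathrm{ran}\,\widehat{A}(j)^*}\colon\mathrm{ran}\,\widehat{A}(j)^*\to\mathrm{ran}\,\widehat{A}(j)$ (a partial isometry restricted to its initial space), the domain blocks and the range blocks each decomposing $E$ orthogonally by $(b)$. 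Hence $A=V\prod_{m=1}^N(zP_m+(I-P_m))$, which is $(c)$.

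For $(a)\Rightarrow(b)$ I would induct on $N$, the case $N=0$ being immediate since an inner divisor of $I_E$ is a constant unitary. Let $N\ge 1$. By Lemma \ref{sxfveryhnt}, $A$ is two-sided inner, so $A^*A=AA^*=I_E$ a.e.\ on $\mathbb{T}$; comparing Fourier coefficients yields $\sum_n\widehat{A}(n)^*\widehat{A}(n)=\sum_n\widehat{A}(n)\widehat{A}(n)^*=I_E$ and, at the top frequency, $\widehat{A}(N)\widehat{A}(0)^*=0$ and $\widehat{A}(0)^*\widehat{A}(N)=0$. The crucial point is that $\widehat{A}(N)$ (hence $\widehat{A}(N)^*$) is a partial isometry; for this I would use that $\widehat{A}(N)$ is isometric on $\mathcal{N}:=\bigcap_{j<N}\ker\widehat{A}(j)$---because multiplication by $A$ is an isometry of $H^2(\mathbb{T},E)$ and $A(\cdot)x=z^N\widehat{A}(N)x$ for $x\in\mathcal{N}$---and then show, using the coefficient relations, that $\mathcal{N}=\overline{\mathrm{ran}\,\widehat{A}(N)^*}$, so that $P:=\widehat{A}(N)^*\widehat{A}(N)$ is the orthogonal projection onto $\overline{\mathrm{ran}\,\widehat{A}(N)^*}$. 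With $P$ in hand, peel off the rightmost Blaschke--Potapov factor: set $C(z):=A(z)(\overline{z}P+(I-P))$. Then $C$ is analytic (since $\widehat{A}(0)P=\widehat{A}(0)\widehat{A}(N)^*\widehat{A}(N)=0$), $\widehat{C}(N)=\widehat{A}(N)(I-P)=0$, and in fact $\deg C=N-1$ because $A=C\,(zP+(I-P))$ forces $\deg C\ge N-1$; moreover $C$ is two-sided inner, since $C^*C=A^*\big((zP+(I-P))(zP+(I-P))^*\big)^{-1}A=A^*A=I_E$ and similarly $CC^*=I_E$. So the inductive hypothesis applies to $C$.

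Finally, from $A=C\,(zP+(I-P))$, i.e.\ $\widehat{A}(n)=\widehat{C}(n-1)P+\widehat{C}(n)(I-P)$, together with the fact that $P$ commutes with every initial projection $\widehat{C}(j)^*\widehat{C}(j)$ of $C$ (which I would deduce from $\mathrm{ran}\,P=\overline{\mathrm{ran}\,\widehat{A}(N)^*}\subseteq\overline{\mathrm{ran}\,\widehat{C}(N-1)^*}$, valid because $\widehat{A}(N)=\widehat{C}(N-1)P$ is isometric on $\mathrm{ran}\,P$), one computes that each $\widehat{A}(n)^*\widehat{A}(n)$ and each $\widehat{A}(n)\widehat{A}(n)^*$ is an orthogonal projection and that $\bigoplus_n\mathrm{ran}\,\widehat{A}(n)^*=\bigoplus_n\mathrm{ran}\,\widehat{A}(n)=E$, which is $(b)$ for $A$. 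The main obstacle is the crucial point above: showing that the leading coefficient is a partial isometry and---more delicately---that $\overline{\mathrm{ran}\,\widehat{A}(N)^*}$ is adapted to the orthogonal decomposition produced by the inductive hypothesis (equivalently, that the peeled-off projection $P$ commutes with the initial projections of $C$). Once this is secured, the rest of the induction is routine bookkeeping with partial isometries.
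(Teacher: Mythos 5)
Your overall architecture --- proving $(c)\Rightarrow(a)\Rightarrow(b)\Rightarrow(c)$, with $(a)\Rightarrow(b)$ done by induction on the degree, peeling a Blaschke--Potapov factor off the right --- differs from the paper's (which works bottom-up, extracting the constant term $\widehat A(0)$ and iterating on the tail), and your implications $(c)\Rightarrow(a)$ and $(b)\Rightarrow(c)$ are fine. But the inductive step of $(a)\Rightarrow(b)$ has a genuine gap at exactly the point you flag as ``the crucial point'': the identity $\mathcal N:=\bigcap_{j<N}\ker\widehat A(j)=\overline{\hbox{ran}\,\widehat A(N)^*}$, equivalently $\widehat A(j)\widehat A(N)^*=0$ for all $j<N$. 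You prove the easy half (that $\widehat A(N)$ is isometric on $\mathcal N$, via $A(\cdot)x=z^N\widehat A(N)x$ for $x\in\mathcal N$), but the other half is asserted to follow ``using the coefficient relations,'' and it does not: the convolution identities coming from $A^*A=AA^*=I_E$ give $\widehat A(0)\widehat A(N)^*=0$ (top frequency), but for $0<j<N$ they only give sums such as $\widehat A(0)\widehat A(1)^*+\cdots+\widehat A(N-1)\widehat A(N)^*=0$, from which the individual vanishing $\widehat A(j)\widehat A(N)^*=0$ cannot be extracted by algebra alone. (Trying to sandwich the Parseval identity $\sum_n\widehat A(n)^*\widehat A(n)=I$ between $\widehat A(N)$ and $\widehat A(N)^*$, which does work for $N=1$, produces cross terms $\widehat A(N)\widehat A(j)^*$ for $0<j<N$ that are precisely what you are trying to kill, so the argument is circular for $N\ge2$.) Without this, $P=\widehat A(N)^*\widehat A(N)$ need not be a projection, $\overline z P+(I-P)$ need not be inner, and the peeling step collapses.

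The statement you need is true, but it is the heart of the theorem rather than bookkeeping. The paper's route to the analogous orthogonality is instructive: it first shows that the \emph{entire tail function} $A^{(1)}(z)=\sum_{n\ge1}\widehat A(n)z^{n-1}$ is, for a.e.\ $z$, a partial isometry with range $(\hbox{ran}\,\widehat A(0))^\perp$ and initial space $\ker\widehat A(0)$; the pointwise inclusion $A^{(1)}(z)x\in(\hbox{ran}\,\widehat A(0))^\perp$ is then expanded in powers of $z$ (via the Poisson integral and uniqueness of Taylor coefficients) to yield $\langle\widehat A(n)x,\widehat A(0)y\rangle=0$ for each individual $n\ge1$. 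Some such ``establish pointwise unitarity of a grouped factor first, then extract coefficient-level orthogonality'' maneuver --- applied to the reversed polynomial $z^NA(z)^*$ if you insist on peeling from the top --- is what your proposal is missing, and I do not see how to complete your step without it. The remaining bookkeeping in your last paragraph (commutation of $P$ with the initial projections of $C$, the identification of the ranges) is fine once this is supplied.
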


\begin{proof} (a) $\Rightarrow$ (b):
Suppose that $A$ is an inner divisor of $z^NI_E$. \ Without loss of
generality we  may assume that $A_0$ and $A_N$ are nonzero. \ Write
$$
A^{(1)}(z):=\sum_{n=1}^N \widehat{A}(n) z^{n-1}.
$$
Then
\begin{equation}\label{aSRYJUPKJN}
A(z)=\widehat{A}(0)+zA^{(1)}(z).
\end{equation}
By Lemma \ref{sxfveryhnt}, $A$ is two-sided inner. \ Thus, for almost
all $z \in \mathbb T$,
\begin{equation}\label{vcxzsdfdxz}
A^{(1)}(z)^*\widehat{A}(0)=0 \quad \hbox{and} \quad
\widehat{A}(0)\widehat{A}(0)^*+A^{(1)}(z)A^{(1)}(z)^*=I_E,
\end{equation}
so that
$$
\widehat{A}(0)\widehat{A}(0)^*\widehat{A}(0)=\widehat{A}(0)\quad\hbox{and}\quad
A^{(1)}(z)A^{(1)}(z)^* A^{(1)}(z)=A^{(1)}(z),
$$
which implies that $\widehat{A}(0)$ and $A^{(1)}(z)$
are
partial isometries for almost all $z \in \mathbb T$ (cf. \cite{Ha}).
Since $A$ is
inner, it follows from (\ref{aSRYJUPKJN}) that, for almost all $z \in
\mathbb T$,
$$
\ker \widehat{A}(0) \bigcap \ker A^{(1)}(z)=\{0\}.
$$
Since $(H\cap K)^\perp=H^\perp \bigvee K^\perp$ for closed subspaces
$H,K$ of $E$, it follows that
\begin{equation}\label{wsdsxfcvbnlmio,;l}
\hbox{ran}\, \widehat{A}(0)^* \bigvee \hbox{ran}\, A^{(1)}(z)^*=E.
\end{equation}
On the other hand, it follows from (\ref{aSRYJUPKJN}) that
$\hbox{ran}\, \widehat{A}(0)^* \subseteq \bigl(\hbox{cl ran}\,
A^{(1)}(z)^*\bigr)^{\perp}$ and, by (\ref{wsdsxfcvbnlmio,;l}), we
have that
$$
\hbox{ran}\, \widehat{A}(0)^* \bigoplus^\perp \hbox{ran}\, A^{(1)}(z)^*=E.
$$
Thus, for almost all $z \in \mathbb T$,
$$
\bigl(\ker A^{(1)}(z)\bigr)^{\perp}=\hbox{ran}\, A^{(1)}(z)^*=\ker
\widehat{A}(0).
$$
Similarly, we can show that, for almost all $z \in \mathbb T$,
\begin{equation}\label{dfvgbhnjhbvcxz}
\hbox{ran}\, A^{(1)}(z)=\ker \widehat{A}(0)^*=\bigl(\hbox{ran}\,
\widehat{A}(0)\bigr)^{\perp}.
\end{equation}
Since $A^{(1)}(z)$ is a partial isometry, $A^{(1)}|_{\ker
\widehat{A}(0)}: \mathbb T \to \mathcal B(\ker \widehat{A}(0),
(\hbox{ran}\, \widehat{A}(0))^{\perp})$ is two-sided inner. \ Now we
will show that
\begin{equation}\label{sdfghblkj}
\hbox{ran}\, \widehat{A}(1) \subseteq \bigl(\hbox{ran}\,
\widehat{A}(0)\bigr)^{\perp}.
\end{equation}
For  $x \in E$, it follows from (\ref{dfvgbhnjhbvcxz}) that
$$
\aligned A^{(1)}(z)x&=\sum_{n=1}^N \widehat{A}(n)x z^{n-1} \in
\bigl(\hbox{ran}\, \widehat{A}(0)\bigr)^{\perp} \ \hbox{for almost all} \ z \in \mathbb T \\
&\Longrightarrow \bigl\langle \widehat{A}(n) x, ~ \widehat{A}(0)y \bigr \rangle=0 \quad \hbox{for all} \ y \in E\\
&\Longrightarrow P[A^{(1)}](\zeta)x =\sum_{n=1}^N \widehat{A}(n)x
\zeta^{n-1} \in \bigl(\hbox{ran}\, \widehat{A}(0)\bigr)^{\perp} \
\hbox{for all} \ \zeta
\in \mathbb D\\
&\Longrightarrow \widehat{A}(1)x=P[A^{(1)}](0)x \in
\bigl(\hbox{ran}\, \widehat{A}(0)\bigr)^{\perp} \, ,
\endaligned
$$
which proves (\ref{sdfghblkj}). \ Put $A^{(2)}(z):=\sum_{n=2}^N
\widehat{A}(n) z^{n-2}$. \ Then, by the same argument, we have that
\begin{itemize}
\item[(i)] $\widehat{A}(1)$ is a partial isometry;
\item[(ii)] $(\ker A^{(2)}(z))^{\perp}=\ker \widehat{A}(1)$ and $\hbox{ran}\, A^{(2)}(z)=\bigl(\hbox{ran}\,
\widehat{A}(0) \bigoplus \hbox{ran}\, \widehat{A}(1)\bigr)^{\perp}$;
\item[(iii)] $A^{(2)}|_{\ker \widehat{A}(1)}: \mathbb T \to \mathcal B(\ker \widehat{A}(1),
(\hbox{ran}\, \widehat{A}(1))^{\perp})$ is two-sided inner;
\item[(iv)] $\hbox{ran}\, \widehat{A}(2) \subseteq \bigl(\hbox{ran}\, \widehat{A}(0) \bigoplus \hbox{ran}\, \widehat{A}(1)\bigr)^{\perp}$.
\end{itemize}
Continuing this process, we have that $\widehat{A}(n)$ is a partial
isometry for each $n=0,1,\cdots, N$, and
$$
\hbox{ran}\, \widehat{A}(N) \subseteq \biggl(\bigoplus_{n=0}^{N-1}
\hbox{ran}\, \widehat{A}(n) \biggr)^{\perp}.
$$
But since $A(z)$ is unitary for almost all $z \in \mathbb T$, we
have that
$$
\bigoplus_{n=0}^{N} \hbox{ran}\, \widehat{A}(n)  = E.
$$
Similarly, we also have that $\widehat{A}(n)^*$ is a partial
isometry for each $n=0,1,\cdots, N$, and
$$
E=\bigoplus_{n=0}^{N} \hbox{ran}\, \widehat{A}(n)^*.
$$

\medskip
(b) $\Rightarrow$ (c): Suppose $\widehat{A}(n)$ and
$\widehat{A}(n)^*$ are partial isometries for each $n=0,1,\cdots,
N$, and
$$
E=\bigoplus_{n=0}^{N} \hbox{ran}\,
\widehat{A}(n)=\bigoplus_{n=0}^{N} \hbox{ran}\, \widehat{A}(n)^*.
$$
Write $E_n:= \hbox{ran}\, \widehat{A}(n)$ and $F_n:=\hbox{ran}\,
\widehat{A}(n)^*$. \ Then we can write
$$
A(z)=\begin{bmatrix}
\widehat{A}(0)|_{F_0}&0&0&\cdots&0\\
0&(\widehat{A}(1)|_{F_1})z&0&\cdots&0\\
\vdots&0&(\widehat{A}(2)|_{F_2})z^2&\ddots&\vdots\\
\vdots&&\ddots&\ddots&0\\
0&0&\cdots&0&(\widehat{A}(N)|_{F_N})z^N
\end{bmatrix} : \bigoplus_{n=0}^{N} F_n \to  \bigoplus_{n=0}^{N}E_n.
$$
Let $P_m$ be the orthogonal projection from $E$ onto
$\bigoplus_{k=m}^N F_k \  (m=1,2,\cdots, N)$. \ Then
$$
A(z)=V\prod_{m=1}^{N}\Bigl(z P_m + (I-P_m)\Bigr),
$$
where $V:=\hbox{diag}(\widehat{A}(0), \widehat{A}(1), \cdots,
\widehat{A}(N))$ is unitary.
\medskip

(c) $\Rightarrow$ (a): Obvious.

\medskip

This completes the proof.
\end{proof}

\medskip

The following corollary gives an affirmative answer to Question
\ref{Q1.1}.

\begin{cor}
If $\Delta\in H^\infty(\mathbb T, \mathcal B(E))$ is a left inner
divisor of $zI_E$, then $\Delta$ is a Blaschke-Potapov factor.
\end{cor}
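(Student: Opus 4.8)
The plan is to deduce this directly from Theorem~\ref{asdcvfgb,mnbv} applied with $N=1$, after a short reduction. First, since $zI_E=\theta I_E$ with $\theta(z)=z$ a scalar inner function, the remark following Lemma~\ref{sxfveryhnt} shows that every left inner divisor of $zI_E$ is in fact an inner divisor of $zI_E$; hence $\Delta$ is an inner divisor of $z^1I_E$. By the elementary observation preceding Theorem~\ref{asdcvfgb,mnbv}, $\Delta$ is then a polynomial of degree at most $1$, say $\Delta(z)=\widehat\Delta(0)+z\widehat\Delta(1)$, and by Lemma~\ref{sxfveryhnt} it is two-sided inner.

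Next I would dispose of the degenerate cases and then invoke the theorem. If $\widehat\Delta(1)=0$ then $\Delta=\widehat\Delta(0)$ is a unitary constant, and if $\widehat\Delta(0)=0$ then $\Delta=z\widehat\Delta(1)$ with $\widehat\Delta(1)$ unitary; in both cases $\Delta$ is, up to a unitary constant right factor, of the form $zP_M+(I_E-P_M)$ with $M=\{0\}$ or $M=E$, hence a Blaschke-Potapov factor. If instead $\widehat\Delta(0)$ and $\widehat\Delta(1)$ are both nonzero, then the implication (a)$\Rightarrow$(c) of Theorem~\ref{asdcvfgb,mnbv}, with $N=1$, gives
$$
\Delta(z)=V\bigl(zP_1+(I_E-P_1)\bigr),
$$
where $P_1$ is the orthogonal projection of $E$ onto $\hbox{ran}\,\widehat\Delta(1)^*$ and $V$ is unitary.

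Finally I would move the unitary from the left to a right factor: writing $M:=V\bigl(\hbox{ran}\,\widehat\Delta(1)^*\bigr)$ and $P_M:=VP_1V^*$, we obtain
$$
\Delta(z)=V\bigl(zP_1+(I_E-P_1)\bigr)=\bigl(zP_M+(I_E-P_M)\bigr)V,
$$
so that, under the convention identifying inner functions up to a unitary constant right factor, $\Delta$ equals the Blaschke-Potapov factor $zP_M+(I_E-P_M)=b_0P_M+(I_E-P_M)$. There is essentially no hard step here: the corollary is a specialization of Theorem~\ref{asdcvfgb,mnbv}. The only points needing care are the reduction from ``left inner divisor'' to ``inner divisor'' (legitimate precisely because $zI_E$ is a scalar inner function times the identity, via the remark after Lemma~\ref{sxfveryhnt}), the treatment of the two degenerate polynomials above, and the cosmetic commutation of $V$ past the projection factor so that the final expression matches the literal definition of a Blaschke-Potapov factor.
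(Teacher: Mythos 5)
Your proof is correct and follows the same route as the paper, whose own proof consists of the single line ``Immediate from Theorem~\ref{asdcvfgb,mnbv}''; you have merely supplied the routine details that the authors leave implicit (the reduction from left inner divisor to inner divisor via the remark after Lemma~\ref{sxfveryhnt}, the two degenerate constant/linear cases, and the commutation of the unitary $V$ to the right). All of these steps check out, so nothing further is needed.
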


\begin{proof}
Immediate from Theorem \ref{asdcvfgb,mnbv}.
\end{proof}

\medskip

We recall:

\begin{lem}\label{ythbbgrvfedcw} \cite[Theorem 3.11.10]{Ni}
Let $A \in H^{\infty}(\mathbb D, \mathcal B(D,E))$. \ Then the nontangential SOT limit
$$
\lim_{r \rightarrow 1}A(rz)\equiv bA(z)
$$
exists for almost all $z \in \mathbb T$, such that $bA \in
H^{\infty}(\mathbb T, \mathcal B(D, E))$ and
$$
A(\zeta)x= P[bA](\zeta)x ,
$$
for $x \in D$ and $\zeta\in\mathbb D$. \ The Taylor coefficients of $A$
coincide with the nonnegatively-indexed Fourier coefficients of $b A$.
Moreover, the mapping $b: A \longrightarrow bA$ is an isometric
bijection from $H^{\infty}(\mathbb D, \mathcal B(D, E))$ onto
$H^{\infty}(\mathbb T, \mathcal B(D, E))$.
\end{lem}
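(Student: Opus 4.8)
The plan is to reduce everything to the classical vector-valued Fatou theorem for $H^\infty(\mathbb D,E)$ and then bootstrap from vectors to operators using a separability argument together with the uniform bound $\|A(\zeta)\|\le\|A\|_{H^\infty(\mathbb D,\mathcal B(D,E))}$ available on $\mathbb D$. First I would fix $x\in D$; then $\zeta\mapsto A(\zeta)x$ lies in $H^\infty(\mathbb D,E)$, so the classical (vector-valued) Fatou theorem gives that the nontangential limit $\lim_{r\to1}A(rz)x$ exists in the norm of $E$ for a.e.\ $z\in\mathbb T$, that the boundary function $g_x$ so obtained lies in $H^\infty(\mathbb T,E)$ with $\widehat{g_x}(n)=0$ for $n<0$, that its nonnegatively-indexed Fourier coefficients are the Taylor coefficients of $A(\cdot)x$, and that $A(\zeta)x=P[g_x](\zeta)$ for $\zeta\in\mathbb D$.

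Second, I would remove the dependence of the exceptional null set on $x$. Choosing a countable dense subset $\{x_k\}$ of $D$ (this is where separability of $D$ enters), let $\mathcal N$ be the union of the countably many null sets off which each limit $\lim_{r\to1}A(rz)x_k$ exists. For $z\notin\mathcal N$ the net $\{A(\zeta)\}$, $\zeta\to z$ nontangentially, is uniformly bounded by $\|A\|_\infty$ and converges on the dense set $\{x_k\}$, hence converges in the strong operator topology on all of $D$ to a bounded operator $bA(z)$ with $\|bA(z)\|\le\|A\|_\infty$. Since $z\mapsto A(rz)$ is SOT-continuous and $bA$ is its a.e.\ SOT-pointwise limit, $bA$ is SOT-measurable, so $bA\in L^\infty_s(\mathbb T,\mathcal B(D,E))=L^\infty(\mathbb T,\mathcal B(D,E))$ by (\ref{poiu}), and $bA(\cdot)x=g_x$ a.e.\ for every $x$. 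Reading off the vector-valued statements coordinate $x$ by coordinate $x$ then gives $\widehat{bA}(n)x=\widehat{g_x}(n)=0$ for $n<0$, i.e.\ $bA\in H^\infty(\mathbb T,\mathcal B(D,E))$, the coincidence of the Taylor coefficients $\widehat A(n)$ with the nonnegatively-indexed Fourier coefficients of $bA$, and the identity $A(\zeta)x=P[g_x](\zeta)=P[bA](\zeta)x$.

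Third, for the isometric bijection: the inequality $\|bA\|_{L^\infty}\le\|A\|_{H^\infty(\mathbb D)}$ is the bound $\|bA(z)\|\le\|A\|_\infty$ just obtained, while the reverse follows from $A(\zeta)x=P[bA](\zeta)x$ together with $\|P_r\|_{L^1(\mathbb T)}=1$, which yields $\|A(\zeta)x\|\le\|bA\|_{L^\infty}\|x\|$. Injectivity is immediate, since $bA=0$ forces $A=P[0]=0$. For surjectivity, given $\Phi\in H^\infty(\mathbb T,\mathcal B(D,E))$ I would set $A(\zeta)x:=P[\Phi](\zeta)x$; expanding the Poisson kernel and using $\widehat\Phi(m)=0$ for $m<0$ shows $A(\zeta)x=\sum_{n\ge0}\widehat\Phi(n)x\,\zeta^n$, so $A$ is holomorphic with $\|A\|_{H^\infty(\mathbb D)}\le\|\Phi\|_{L^\infty}$, and the Fatou part already proved gives $bA=\Phi$.

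The step I expect to be the main obstacle is exactly this passage from the vector-valued Fatou theorem to the operator-valued one: producing a single null set that works simultaneously for all $x\in D$, which requires separability of $D$ and the uniform operator bound on $\mathbb D$, and then verifying that the limiting function is SOT-measurable so that it genuinely lands in $H^\infty(\mathbb T,\mathcal B(D,E))$ rather than being merely defined pointwise. Everything else amounts to transcribing, for each fixed $x$, facts that are standard for $E$-valued $H^\infty$.
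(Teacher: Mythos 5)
The paper offers no proof of this lemma at all---it is quoted as \cite[Theorem 3.11.10]{Ni}---so there is no in-paper argument to compare against; judged on its own, your proof is the standard one and is essentially correct. Two hypotheses you lean on deserve to be made explicit: the vector-valued Fatou theorem you invoke for $\zeta\mapsto A(\zeta)x$ holds because the target $E$ is a Hilbert space (it has the analytic Radon--Nikodym property; for general Banach targets this step fails), and the separability of $D$ needed to fuse the countably many null sets into one is a standing assumption in Nikolskii's setting rather than something stated in the lemma. Finally, in the surjectivity step the conclusion $bA=\Phi$ is most cleanly closed by observing that $bA$ and $\Phi$ have identical Fourier coefficients (both vanish for $n<0$, and for $n\ge 0$ both equal the $n$-th Taylor coefficient of $A=P[\Phi]$), so that they agree a.e.; otherwise the phrase ``the Fatou part already proved gives $bA=\Phi$'' quietly assumes that the Poisson integral determines its boundary function.
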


\bigskip

For $\Phi \in H^{\infty}(\mathbb T,\mathcal B(E))$ and $\alpha \in
\mathbb D$, write
$$
\Phi_{\alpha}:= \Phi \circ b_{\alpha}.
$$
Then we can easily check the following (cf. \cite{GHKL}):
\begin{itemize}
\item[(a)] $\Phi_{\alpha} \in H^{\infty}(\mathbb T,\mathcal B(E))$;
\item[(b)] If $\Delta$ is an inner function with values in $\mathcal B(E)$,
then so is $\Delta_{\alpha}$.
\end{itemize}

\medskip

We then have:

\medskip

\begin{cor}\label{afrvghnjmnbv} Let $A \in H^{\infty}(\mathbb T,
\mathcal B(E))$. \ Then the following are equivalent:
\begin{itemize}
\item[(a)] $A$ is an inner divisor of $b_{\alpha}^NI_E$;
\item[(b)] $A$ is a polynomial in $b_{\alpha}$ of degree at most $N$, of the form
$$
A=\sum_{n=0}^{N}A_nb_{\alpha}^n,
$$
where $A_n$ and $A_n^*$ are partial isometries for each
$n=0,1,\cdots, N$, and
$$
E=\bigoplus_{n=0}^{N} \hbox{ran}\, A_n=\bigoplus_{n=0}^{N}
\hbox{ran}\, A_n^*;
$$
\item[(c)] $A$ is a finite Blaschke-Potapov product of the form
$$
A=V\prod_{m=1}^{N}\Bigl(b_{\alpha} P_m + (I-P_m)\Bigr),
$$
where $P_m$ is the orthogonal projection from $E$ onto
$\bigoplus_{n=m}^N\hbox{ran}\, A_n^*$, and
$V:=\hbox{diag}(A_0|_{\hbox{ran}\, A_0^*}, A_1|_{\hbox{ran}\,
A_1^*}, \cdots, A_N|_{\hbox{ran}\, A_N^*})$.
\end{itemize}
\end{cor}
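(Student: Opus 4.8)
The plan is to obtain Corollary \ref{afrvghnjmnbv} from Theorem \ref{asdcvfgb,mnbv} by the change of variable $z \mapsto b_\alpha(z)$, exploiting that $b_\alpha$ is a disc automorphism with inverse $b_{-\alpha}$. First I would record the needed properties of the substitution $\Phi \mapsto \Phi_\alpha := \Phi \circ b_\alpha$: since $b_\alpha$ maps $\mathbb{D}$ onto $\mathbb{D}$ and $\mathbb{T}$ onto $\mathbb{T}$ preserving Lebesgue-null sets, and since $b_\alpha \circ b_{-\alpha} = b_{-\alpha} \circ b_\alpha = \hbox{id}$, the map $\Phi \mapsto \Phi_{-\alpha}$ is a unital algebra isomorphism of $H^\infty(\mathbb{T}, \mathcal{B}(E))$ onto itself with inverse $\Phi \mapsto \Phi_\alpha$ (properties (a)--(b) stated just before the corollary, together with the trivial fact that composition distributes over pointwise products), it carries inner functions to inner functions, and it sends $b_\alpha^N I_E$ to $z^N I_E$. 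Consequently $A\Omega = \Omega A = b_\alpha^N I_E$ holds with $\Omega \in H^\infty(\mathbb{T}, \mathcal{B}(E))$ if and only if $A_{-\alpha}\Omega_{-\alpha} = \Omega_{-\alpha}A_{-\alpha} = z^N I_E$ with $\Omega_{-\alpha} \in H^\infty(\mathbb{T}, \mathcal{B}(E))$; that is, $A$ is an inner divisor of $b_\alpha^N I_E$ if and only if $A_{-\alpha}$ is an inner divisor of $z^N I_E$.

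Next I would invoke the observation preceding Theorem \ref{asdcvfgb,mnbv} to see that, in this situation, $A_{-\alpha}$ is automatically a polynomial of degree at most $N$, so that Theorem \ref{asdcvfgb,mnbv} applies to it. Set $A_n := \widehat{A_{-\alpha}}(n)$ for $n = 0, 1, \cdots, N$; by Lemma \ref{ythbbgrvfedcw} these are exactly the Taylor coefficients of $A_{-\alpha}$, and re-composing with $b_\alpha$ turns $A_{-\alpha}(z) = \sum_{n=0}^N A_n z^n$ into $A = (A_{-\alpha})_\alpha = \sum_{n=0}^N A_n b_\alpha^n$. Now condition (b) of Theorem \ref{asdcvfgb,mnbv} for $A_{-\alpha}$ --- that $A_n$ and $A_n^*$ are partial isometries and $E = \bigoplus_{n=0}^N \hbox{ran}\, A_n = \bigoplus_{n=0}^N \hbox{ran}\, A_n^*$ --- is literally condition (b) of the corollary; and condition (c) of Theorem \ref{asdcvfgb,mnbv} for $A_{-\alpha}$, namely $A_{-\alpha}(z) = V \prod_{m=1}^N (z P_m + (I - P_m))$ with $P_m$ the projection onto $\bigoplus_{n=m}^N \hbox{ran}\, A_n^*$ and $V = \hbox{diag}(A_0|_{\hbox{ran}\, A_0^*}, \cdots, A_N|_{\hbox{ran}\, A_N^*})$, becomes, after composing with $b_\alpha$ (under which $z P_m + (I - P_m) \mapsto b_\alpha P_m + (I - P_m)$ while the constant $V$ is unchanged), exactly condition (c) of the corollary. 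Since Theorem \ref{asdcvfgb,mnbv} gives (a) $\Leftrightarrow$ (b) $\Leftrightarrow$ (c) for $A_{-\alpha}$, and each of the three conditions for $A_{-\alpha}$ is equivalent, via the substitution, to the corresponding condition for $A$, the corollary follows.

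I do not anticipate a genuine obstacle; the only point needing care is the bookkeeping that composition with $b_{-\alpha}$ is a unital algebra isomorphism of $H^\infty(\mathbb{T}, \mathcal{B}(E))$ preserving innerness and carrying polynomials of degree $\le N$ in $z$ to polynomials of degree $\le N$ in $b_\alpha$ (and back), so that Theorem \ref{asdcvfgb,mnbv} may be quoted verbatim, together with the identification $A_n = \widehat{A_{-\alpha}}(n)$ that makes the coefficients in (b) well defined. If one prefers to avoid even this, the implication (c) $\Rightarrow$ (a) can be seen directly: each factor satisfies $(b_\alpha P_m + (I - P_m))(b_\alpha(I - P_m) + P_m) = b_\alpha I_E = (b_\alpha(I - P_m) + P_m)(b_\alpha P_m + (I - P_m))$, and since $b_\alpha I_E$ is central, $\prod_{m=1}^N (b_\alpha P_m + (I - P_m))$ is an inner divisor of $b_\alpha^N I_E$, which remains true after left-multiplication by the unitary $V$.
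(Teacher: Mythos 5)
Your proposal is correct and is essentially the paper's own argument: the paper proves the corollary by observing that the substitution $A \mapsto A_{-\alpha}$ converts each of conditions (a), (b), (c) into the corresponding condition of Theorem \ref{asdcvfgb,mnbv} for $A_{-\alpha}$, and then quotes that theorem. Your write-up merely supplies more of the routine bookkeeping (the isomorphism property of composition with $b_{-\alpha}$, the identification $A_n=\widehat{A_{-\alpha}}(n)$, and the direct check of (c) $\Rightarrow$ (a)) than the paper chooses to record.
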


\begin{proof} Observe that
\begin{itemize}
\item[(i)] $A(z)=\sum_{n=0}^{N}A_nb_{\alpha}(z)^n
\Longleftrightarrow A_{-\alpha}(z)=\sum_{n=0}^{N}A_nz^n$;
\item[(ii)] $A$ is an inner divisor of $b_{\alpha}^NI_E
\Longleftrightarrow$ $A_{-\alpha}$ is an inner divisor of $z^NI_E$;
\item[(iii)] $A(z)=V\prod_{m=1}^{N}\Bigl(b_{\alpha} P_m + (I-P_m)\Bigr)
\Longleftrightarrow A_{-\alpha}(z)=V\prod_{m=1}^{N}\Bigl(z P_m +
(I-P_m)\Bigr)$.
\end{itemize}
Thus the result follows at once from Theorem \ref{asdcvfgb,mnbv}.
\end{proof}

\medskip

\begin{cor}\label{afrvdfsnbv} Let $A \in H^{\infty}(\mathbb T,
\mathcal B(E))$. \ If $A$ is a nontrivial inner divisor of
$b_{\alpha}^NI_E$, then $\widehat{A_{-\alpha}}(n)\neq 0$ for some
$n=1,2,\cdots, N$.
\end{cor}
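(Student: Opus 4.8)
The plan is to argue by contradiction and let the structure theorem for inner divisors of $b_\alpha^N I_E$ do all the work. So I would suppose that $A$ is a nontrivial inner divisor of $b_\alpha^N I_E$ but that, contrary to the assertion, $\widehat{A_{-\alpha}}(n)=0$ for every $n=1,2,\dots,N$. Since $A$ is in particular an inner divisor of $b_\alpha^N I_E$, Corollary~\ref{afrvghnjmnbv} applies, and its condition~(b) tells us that, writing $A_n:=\widehat{A_{-\alpha}}(n)$, we have $A=\sum_{n=0}^{N}A_n b_\alpha^n$, each $A_n$ and $A_n^*$ is a partial isometry, and $E=\bigoplus_{n=0}^{N}\hbox{ran}\, A_n=\bigoplus_{n=0}^{N}\hbox{ran}\, A_n^*$.

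Next I would feed the contradiction hypothesis into these identities. Since $A_n=0$ for $n=1,\dots,N$, the function $A=A_0$ is constant and both orthogonal decompositions collapse to $E=\hbox{ran}\, A_0=\hbox{ran}\, A_0^*$. The first equality makes $A_0$ surjective; the second, together with $\ker A_0=(\hbox{ran}\, A_0^*)^{\perp}$, makes $A_0$ injective. An injective partial isometry has initial space $(\ker A_0)^{\perp}=E$, hence is an isometry, and a surjective isometry is unitary; so $A_0$, and therefore $A$, is a unitary constant. This contradicts the hypothesis that $A$ is a \emph{nontrivial} inner divisor, and the proof is complete.

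I do not expect any real obstacle here: once Corollary~\ref{afrvghnjmnbv} is available the argument is essentially a one-line unwinding of definitions. The only two points that need a moment's attention are (i) making sure that the notion of ``nontrivial inner divisor'' from Section~1 is exactly ``not a unitary constant,'' which is precisely what the contradiction produces; and (ii) keeping the change-of-variable bookkeeping straight, namely that $A_{-\alpha}=A\circ b_{-\alpha}$ with $b_\alpha\circ b_{-\alpha}=\mathrm{id}$, so that the coefficients $\widehat{A_{-\alpha}}(n)$ genuinely record the Taylor coefficients of $A$ expanded in powers of $b_\alpha$ and their simultaneous vanishing for $n\ge 1$ really does force $A$ to be constant.
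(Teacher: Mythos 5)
Your argument is correct and is exactly the route the paper intends: the paper's proof is the single line ``Immediate from Corollary~\ref{afrvghnjmnbv},'' and your write-up simply unwinds condition~(b) of that corollary under the contradiction hypothesis to conclude that $A=A_0$ is a unitary constant, contradicting nontriviality. The two points you flag (that ``nontrivial'' means ``non-unitary constant,'' and the $b_\alpha$-coefficient bookkeeping) are handled correctly, so nothing further is needed.
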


\begin{proof}
Immediate from Corollary \ref{afrvghnjmnbv}.
\end{proof}

\medskip

%
%
%
%

\section{Operator-valued rational functions}

\medskip

In this section we will introduce the notion of operator-valued
``rational" function. \ Recall that a matrix-valued function is
rational if its entries are rational functions. \ But this definition
is not appropriate for operator-valued functions, in particular
$H^{\infty}$-functions, even though the terminology of ``entry'' may
be properly interpreted. \ Thus, the new idea should capture and encapsulate a definition of operator-valued rational function which is equivalent to the condition that each entry is
rational when the function is matrix-valued. \ In the sequel, we give a formal definition of operator-valued rational function.

To do so, we first recall Toeplitz and Hankel operators.
For $\Phi\in L^{\infty}(\mathbb T, \mathcal{B}(D,E)) $, the Hankel
operator $H_\Phi: H^2(\mathbb T, D) \to H^2(\mathbb T, E)$ is
defined by
$$
H_\Phi f:=J P_-(\Phi f)  \quad(f \in H^2(\mathbb T, D)),
$$
where $J$ denotes the unitary operator from $L^2(\mathbb T,E)$ to
$L^2(\mathbb T,E)$ given by $(Jg)(z) :=\overline{z} g(\overline{z})$
for $g \in L^2(\mathbb T, E)$ and $P_-$ is the orthogonal
projection from $L^2(\mathbb T, E)$ onto $L^2(\mathbb T, E) \ominus
H^2(\mathbb T, E)$. \ Also a Toeplitz operator $T_{\Phi}: H^2(\mathbb
T, D) \to H^2(\mathbb T, E)$ is defined by
$$
T_{\Phi}f:= P_+(\Phi f) \quad(f \in H^2(\mathbb T, D)),
$$
where $P_+$ is the orthogonal projection from $L^2(\mathbb T, E)$
onto $H^2(\mathbb T, E)$.

\medskip

For a $B(D,E)$-valued function $\Phi$, write
$$
\breve\Phi (z):=\Phi(\overline z)\quad\hbox{and}\quad
\widetilde\Phi:=\breve\Phi^*.
$$

As usual, a {\it shift} operator $S_E$ on $H^2(\mathbb T, E)$ is
defined by
$$
(S_E f)(z):=zf(z)\quad \hbox{for each} \ f\in H^2(\mathbb T, E).
$$

\medskip

The following theorem is a fundamental result in modern operator
theory.

\medskip

\noindent {\bf The Beurling-Lax-Halmos Theorem.}\label{beur}
\cite{CHL3}, \cite{Ha}, \cite{FF}, \cite{Pe} A subspace
$M$ of $H^2(\mathbb T, E)$ is invariant for the shift operator $S_E$
on $H^2(\mathbb T, E)$ if and only if
$$
M=\Delta H^2(\mathbb T, E^{\prime}),
$$
where $E^{\prime}$ is a subspace of $E$ and $\Delta$ is an inner
function with values in $\mathcal B(E^{\prime}, E)$. \  Furthermore,
$\Delta$ is unique up to a unitary constant right factor, i.e., if
$M=\Theta H^2(\mathbb T, E^{\prime \prime})$, where $\Theta$ is an
inner function with values  in $\mathcal B(E^{\prime\prime}, E)$,
then $\Delta=\Theta V$, where $V$ is a unitary operator from
$E^{\prime}$ onto $E^{\prime\prime}$.

\bigskip

If $\Phi\in L^{\infty}(\mathbb T, \mathcal B(D,E))$ then, by the
Beurling-Lax-Halmos Theorem,
$$
\hbox{ker}\, H_{\Phi^*}=\Delta H^2(\mathbb T, E^{\prime})
$$
for some inner function $\Delta$ with values in $\mathcal
B(E^{\prime}, E)$. \ We note that $E^{\prime}$ may be the zero space
and $\Delta$ need not be two-sided inner.

\medskip

We recall:

\begin{df}\label{dfbundd} \cite{CHL3}
A function $\Phi\in L^2_s(\mathbb T, B(D,E))$ is
said to be of {\it bounded type} if $\hbox{ker}\,
H_{{\Phi}}^*=\Theta H^2_{E}$ for some two-sided inner function
$\Theta$ with values in $\mathcal B(E)$.
\end{df}

\medskip

It is known that if $\phi\equiv (\phi_{ij})$ is a matrix-valued
function of bounded type then each entry $\phi_{ij}$ is of bounded
type, i.e., $\phi_{ij}$ is a quotient of two bounded analytic
functions. \ In particular, it is also known (\cite{Ab}, \cite{CHL1})
that if $\phi\in L^2$ is of bounded type then $\phi$ can be written
as
\begin{equation}\label{bts}
\phi=\overline{\theta} a\quad \hbox{(where $\theta$ is an inner
function and $a\in H^2$).}
\end{equation}

\medskip

For $\Phi\in H^{\infty}(\mathbb T, \mathcal B(D_1,E))$ and $\Psi\in
H^{\infty} (\mathbb T, \mathcal B(D_2,E))$, we say that $\Phi$ and
$\Psi$ are {\it left coprime} if $\Phi$ and $\Psi$ has no common
nontrivial left inner divisor. \  Also, we say that $\Phi$ and $\Psi$
are {\it right coprime} if $\widetilde\Phi$ and $\widetilde\Psi$ are
left coprime.

\medskip

\begin{lem}\label{rem2.4} \cite[Lemma 2.4]{CHL3}
If $\Phi\in L^\infty (\mathbb T, \mathcal B(D,E))$ and $\Delta$ is a
two-sided inner function with values in $\mathcal B(E)$, then the
following are equivalent:
\begin{itemize}
\item[(a)] $\breve\Phi$ is of bounded type, i.e., $\ker H_{\Phi^*}=\Delta H^2(\mathbb T, E)$;
\item[(b)] $\Phi=\Delta A^*$,
where $A\in H^\infty (\mathbb T, \mathcal B(E,D))$ is such that
$\Delta$ and $A$ are right coprime.
\end{itemize}
\end{lem}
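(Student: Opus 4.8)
The proof hinges on the elementary identity
\[
\ker H_{\Phi^*}=\bigl\{f\in H^2(\mathbb T,E):\Phi^*f\in H^2(\mathbb T,D)\bigr\},
\]
(a consequence of $H_{\Phi^*}f=JP_-(\Phi^*f)$ with $J$ unitary), which exhibits $\ker H_{\Phi^*}$ as the \emph{largest} $S_E$-invariant subspace that $\Phi^*$ carries into $H^2(\mathbb T,D)$, together with the Beurling--Lax--Halmos Theorem and Lemma \ref{sxfveryhnt}. The easy half of $(b)\Rightarrow(a)$ is immediate: if $\Phi=\Delta A^*$ with $\Delta$ two-sided inner and $A\in H^\infty$, then $\Phi^*\Delta g=A\Delta^*\Delta g=Ag\in H^2(\mathbb T,D)$ for every $g\in H^2(\mathbb T,E)$, so $\Delta H^2(\mathbb T,E)\subseteq\ker H_{\Phi^*}$; the reverse inclusion is where right coprimeness is needed.

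For $(a)\Rightarrow(b)$ I would test the hypothesis $\ker H_{\Phi^*}=\Delta H^2(\mathbb T,E)$ on the constant functions $\Delta x$ ($x\in E$) to get $\Phi^*\Delta x\in H^2(\mathbb T,D)$ for all $x$; being bounded, $A:=\Phi^*\Delta$ then lies in $H^\infty(\mathbb T,\mathcal B(E,D))$, and $\Phi=\Delta\Delta^*\Phi=\Delta A^*$. To see that $\Delta$ and $A$ are right coprime, i.e.\ that $\widetilde\Delta$ and $\widetilde A$ admit no common nontrivial left inner divisor, suppose $\Theta$ were one. Since $\widetilde\Delta$ is again two-sided inner, Lemma \ref{sxfveryhnt} forces $\Theta$ to be two-sided inner; writing $\widetilde\Delta=\Theta X$ and $\widetilde A=\Theta Y$ (so that $X$ is two-sided inner) and applying $\Psi\mapsto\widetilde\Psi$, which reverses products and preserves $H^\infty$, yields $\Delta=\widetilde X\,\widetilde\Theta$ and $A=\widetilde Y\,\widetilde\Theta$. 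Hence $\Phi=\Delta A^*=\widetilde X\,\widetilde\Theta\,\widetilde\Theta^*\,\widetilde Y^*=\widetilde X\,\widetilde Y^*$, so $\widetilde X H^2(\mathbb T,E)\subseteq\ker H_{\Phi^*}=\Delta H^2(\mathbb T,E)=\widetilde X\,\widetilde\Theta H^2(\mathbb T,E)\subseteq\widetilde X H^2(\mathbb T,E)$. Equality throughout, combined with the uniqueness clause of Beurling--Lax--Halmos, forces $\widetilde\Theta$, hence $\Theta$, to be a unitary constant --- contradicting nontriviality.

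For the remaining inclusion of $(b)\Rightarrow(a)$: by Beurling--Lax--Halmos write $\ker H_{\Phi^*}=\Delta_0 H^2(\mathbb T,E_0)$ with $\Delta_0$ inner. The inclusion $\Delta H^2(\mathbb T,E)\subseteq\Delta_0 H^2(\mathbb T,E_0)$ gives $\Delta=\Delta_0 C$ with $C:=\Delta_0^*\Delta$ inner; as $\Delta$ is two-sided inner, Lemma \ref{sxfveryhnt} makes $\Delta_0$ two-sided inner, and then a short computation makes $C$ two-sided inner. Running the construction of the previous paragraph with $\Delta_0$ in place of $\Delta$ produces $\Phi=\Delta_0 B^*$ with $B:=\Phi^*\Delta_0\in H^\infty$; comparing $\Phi=\Delta A^*=\Delta_0 C A^*$ with $\Phi=\Delta_0 B^*$ and cancelling $\Delta_0$ on the left gives $B=AC^*$, hence $A=BC$. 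Thus $C$ is a common right inner divisor of $\Delta$ and $A$, so $\widetilde C$ is a common left inner divisor of $\widetilde\Delta$ and $\widetilde A$; right coprimeness then makes $\widetilde C$, and therefore $C$, a unitary constant, so $\ker H_{\Phi^*}=\Delta_0 H^2(\mathbb T,E_0)=\Delta H^2(\mathbb T,E)$.

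The main obstacle is bookkeeping rather than ideas: one must track carefully how $\Phi\mapsto\widetilde\Phi=\breve\Phi^*$ behaves --- it reverses the order of products, preserves membership in $H^\infty$, and preserves two-sided inner-ness --- and invoke Lemma \ref{sxfveryhnt} repeatedly to guarantee that every divisor and cofactor that appears is two-sided inner, so that ``common right inner divisor of $\Delta$ and $A$'' matches up exactly with ``common left inner divisor of $\widetilde\Delta$ and $\widetilde A$.'' The single genuinely substantive point is recognising $\ker H_{\Phi^*}$ as the maximal $S_E$-invariant subspace mapped into $H^2$ by $\Phi^*$; this is what turns a would-be common inner divisor into a forced equality of Beurling--Lax--Halmos ranges, hence into triviality.
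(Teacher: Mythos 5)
The paper itself gives no proof of Lemma \ref{rem2.4} (it is imported from \cite[Lemma 2.4]{CHL3}), so there is nothing internal to compare against; your argument is correct and is the natural one, resting on the identification $\ker H_{\Phi^*}=\{f\in H^2(\mathbb T,E):\Phi^*f\in H^2(\mathbb T,D)\}$, the Beurling--Lax--Halmos theorem, and repeated use of Lemma \ref{sxfveryhnt} to keep every divisor and cofactor two-sided inner so that right coprimeness of $\Delta$ and $A$ translates exactly into left coprimeness of $\widetilde\Delta$ and $\widetilde A$. The only blemish is notational: once you write $\widetilde\Delta=\Theta X$ with $\Theta$ taking values in $\mathcal B(E'',E)$, the invariant subspace you produce is $\widetilde X H^2(\mathbb T,E'')$ rather than $\widetilde X H^2(\mathbb T,E)$; with that label corrected, the chain of inclusions and the uniqueness clause of Beurling--Lax--Halmos force $\widetilde\Theta$ to be a unitary constant exactly as you claim.
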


\medskip

We now introduce:

\begin{df}\label{ddd} A function $\Phi \in H^{\infty}(\mathbb T, \mathcal B(D, E))$ is said to be {\it
rational} if
\begin{equation}\label{def3.3}
\theta H^2(\mathbb T, E) \subseteq
\hbox{ker}\,H_{\Phi^*}
\end{equation}
for some finite Blaschke product $\theta$.
\end{df}

\medskip

Observe that if $\Phi \in H^{\infty}(\mathbb T, \mathcal B(D, E))$,
then
\begin{equation}\label{rsftuy}
\Phi\ \hbox{is rational} \Longrightarrow \breve{\Phi}\ \hbox{is of
bounded type.}
\end{equation}
To see this, suppose $\Phi$ is rational. \ By definition and the
Beurling-Lax-Halmos Theorem there exist a finite Blaschke product
$\theta$ and an inner function $\Delta \in H^{\infty}(\mathbb T,
\mathcal B(E^{\prime}, E)))$ such that
$$
\theta H^2(\mathbb T, E) \subseteq \hbox{ker}\,H_{\Phi^*}=\Delta
H^2(\mathbb T, E^{\prime}),
$$
which implies that $\Delta$ is a left inner divisor of $\theta
I_{E}$. \  Thus $\Delta$ is two-sided inner, so that by Lemma
\ref{rem2.4}, $\breve{\Phi}$ is of bounded type, which proves
(\ref{rsftuy}).

\medskip

Also, if $\Phi\equiv (\phi_{ij})\in H^\infty(\mathbb T, M_{m\times
n})$ is a  rational function in the sense of Definition \ref{ddd},
then each entry $\phi_{ij}$ is rational. \ To see this suppose a
matrix-valued  function $\Phi$ satisfies the condition
(\ref{def3.3}). \ Put $A:=\theta \Phi^*$. \ Then $A\in H^\infty(\mathbb
T, M_{n \times m})$ and $\Phi=\theta A^*$. \  Thus $\phi_{ij}$ can be written
as
$$
\phi_{ij}=\theta\overline{a_{ij}} \quad(a_{ij}\in H^{\infty}).
$$
Via the Kronecker's Lemma \cite[p.183]{Ni1}, we can see that
$$
\phi_{ij}\ \hbox{is rational}\Longleftrightarrow
\phi_{ij}=\theta\overline{a_{ij}}\ \hbox{with a finite Blaschke
product}\ \theta,
$$
which says that  each $\phi_{ij}$ is rational.

In particular, if $\theta=z^n$ in (\ref{def3.3}), $\Phi$ becomes an
operator-valued polynomial.

\medskip

In 1955, V.P. Potapov \cite{Po} proved that  an $n\times n$
matrix-valued function $\Phi$ is rational and inner if and only if
it can be represented as a finite Blaschke-Potapov product. \ In this
section, we extend this result to operator-valued functions. \ To so
so, we first observe:

\medskip

\begin{lem}\label{saxfvedrtyhb} Suppose that $\theta$ is a finite Blaschke product of the form
$$
\theta=\prod_{n=1}^M b_{\alpha_n} \quad(\alpha_n \in \mathbb D)
$$
and $\Delta$ is an inner divisor of $\Theta=\theta I_E$. \ Let
$\Omega:=\Theta \Delta^*$ and $P_n$ be the orthogonal projection of
$E$ onto $\hbox{cl ran}\, P[\Omega](\alpha_n)$. \ Then $\Delta
\bigl(b_{\alpha_n}P_{n}+(I_E-P_{n}) \bigr)^*$ is an inner divisor of
$\Theta_n:=\theta \overline{b_{\alpha_n}} I_E$  for each
$n=1,2,\cdots, M$.
\end{lem}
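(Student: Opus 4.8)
The plan is to produce an explicit cofactor. Throughout, write $\Psi_n:=b_{\alpha_n}P_n+(I_E-P_n)$, a Blaschke-Potapov factor, hence two-sided inner since $b_{\alpha_n}\overline{b_{\alpha_n}}=1$ a.e.\ on $\mathbb T$; and identify each function in $H^{\infty}(\mathbb T,\mathcal B(E))$ with its holomorphic extension to $\mathbb D$, which by Lemma \ref{ythbbgrvfedcw} is its Poisson integral $P[\,\cdot\,]$. First I would record the basic structure. Since $\Delta$ is an inner divisor of the two-sided inner function $\Theta=\theta I_E$, Lemma \ref{sxfveryhnt} gives that $\Delta$ is two-sided inner, and Lemma \ref{dcvgbhnlkjn} gives $\Omega=\Theta\Delta^*\in H^{\infty}(\mathbb T,\mathcal B(E))$; as $\Theta$ is scalar-valued, $\Omega=\Theta\Delta^*=\Delta^*\Theta$, whence $\Delta\Omega=\Delta\Delta^*\Theta=\Theta$ a.e.\ on $\mathbb T$, i.e.\ $\Theta=\Delta\Omega$. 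Note also that $\Theta_n=\theta\,\overline{b_{\alpha_n}}I_E$ equals $\bigl(\prod_{k\neq n}b_{\alpha_k}\bigr)I_E$ a.e.\ on $\mathbb T$, so it is two-sided inner. The one auxiliary fact needed is: if $F\in H^{\infty}(\mathbb T,\mathcal B(E))$ and $P[F](\alpha_n)=0$, then $\overline{b_{\alpha_n}}F\in H^{\infty}(\mathbb T,\mathcal B(E))$. Indeed, for each $x\in E$ the function $F(\cdot)x\in H^2(\mathbb T,E)$ has value $P[F](\alpha_n)x=0$ at $\alpha_n$, so $F(\cdot)x\in b_{\alpha_n}H^2(\mathbb T,E)$; thus $\overline{b_{\alpha_n}}F(\cdot)x\in H^2(\mathbb T,E)$, and since $\|\overline{b_{\alpha_n}}F(\cdot)x\|_{\infty}=\|F(\cdot)x\|_{\infty}$, the function $\overline{b_{\alpha_n}}F$ lies in $H^{\infty}_s(\mathbb T,\mathcal B(E))=H^{\infty}(\mathbb T,\mathcal B(E))$ by (\ref{poiu}).

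Next I would check $\Delta\Psi_n^*\in H^{\infty}(\mathbb T,\mathcal B(E))$. Writing $\Delta\Psi_n^*=\Delta(I_E-P_n)+\overline{b_{\alpha_n}}\,\Delta P_n$, only the last summand is in question, and by the auxiliary fact it suffices to show $P[\Delta P_n](\alpha_n)=0$. By (\ref{lemma4.treghc1000001211}), $P[\Delta P_n]=P[\Delta]\,P_n$, while applying (\ref{lemma4.treghc1000001211}) to $\Theta=\Delta\Omega$ and evaluating at $\alpha_n$ gives $\theta(\alpha_n)I_E=P[\Delta](\alpha_n)P[\Omega](\alpha_n)$; since $\theta(\alpha_n)=0$, this forces $\hbox{ran}\,P[\Omega](\alpha_n)\subseteq\ker P[\Delta](\alpha_n)$, hence $\hbox{cl ran}\,P[\Omega](\alpha_n)\subseteq\ker P[\Delta](\alpha_n)$, i.e.\ $P[\Delta](\alpha_n)P_n=0$, as needed. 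Moreover $\Delta\Psi_n^*$ is two-sided inner, since $\Delta$ and $\Psi_n$ are: $(\Delta\Psi_n^*)^*(\Delta\Psi_n^*)=\Psi_n\Delta^*\Delta\Psi_n^*=\Psi_n\Psi_n^*=I_E$ and $(\Delta\Psi_n^*)(\Delta\Psi_n^*)^*=\Delta\Delta^*=I_E$ a.e.\ on $\mathbb T$.

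It then remains to produce the cofactor. Set $X:=\theta\,\overline{b_{\alpha_n}}\,\Psi_n\Delta^*$; since $\Theta_n$ is scalar-valued, $X=(\Delta\Psi_n^*)^*\Theta_n=\Theta_n(\Delta\Psi_n^*)^*$. Using $\theta\Delta^*=\Theta\Delta^*=\Omega$ and $b_{\alpha_n}\overline{b_{\alpha_n}}=1$ a.e.\ on $\mathbb T$, one computes $X=\overline{b_{\alpha_n}}\Psi_n\Omega=P_n\Omega+\overline{b_{\alpha_n}}(I_E-P_n)\Omega$. The first summand is in $H^{\infty}$, and the second is too by the auxiliary fact, because $P[(I_E-P_n)\Omega](\alpha_n)=(I_E-P_n)P[\Omega](\alpha_n)=0$ by the very definition of $P_n$ (its range contains $\hbox{ran}\,P[\Omega](\alpha_n)$). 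Hence $X\in H^{\infty}(\mathbb T,\mathcal B(E))$, and from two-sided innerness of $\Delta\Psi_n^*$ we get $(\Delta\Psi_n^*)X=(\Delta\Psi_n^*)(\Delta\Psi_n^*)^*\Theta_n=\Theta_n$ and $X(\Delta\Psi_n^*)=\Theta_n(\Delta\Psi_n^*)^*(\Delta\Psi_n^*)=\Theta_n$ a.e.\ on $\mathbb T$. Therefore $\Delta\Psi_n^*$ is both a left and a right inner divisor of $\Theta_n$, which is the claim.

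The main obstacle is the two analyticity verifications — that $\Delta\Psi_n^*$ and its cofactor $X$ both lie in $H^{\infty}(\mathbb T,\mathcal B(E))$. Everything hinges on two range inclusions at the point $\alpha_n$: $\hbox{ran}\,P[\Omega](\alpha_n)\subseteq\ker P[\Delta](\alpha_n)$, which is forced by $\theta(\alpha_n)=0$ via the multiplicativity (\ref{lemma4.treghc1000001211}) of the holomorphic extension, and $\hbox{ran}\,P[\Omega](\alpha_n)\subseteq\hbox{ran}\,P_n$, which is built into the choice of $P_n$; the remaining steps are formal manipulations with two-sided inner functions.
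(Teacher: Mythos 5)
Your proof is correct and follows essentially the same route as the paper's: the crux in both is that $\theta(\alpha_n)=0$ together with $P[\Theta]=P[\Delta]P[\Omega]$ forces $\hbox{cl ran}\, P[\Omega](\alpha_n)\subseteq \ker P[\Delta](\alpha_n)$, which makes $\Delta\Psi_n^*$ analytic, and the same divisibility-by-$b_{\alpha_n}$ argument (the paper phrases it on $\mathbb D$ as $A-A(\alpha_n)=A_n(b_{\alpha_n}I_E)$ and $B_nC=(b_{\alpha_n}I_E)F$, you phrase it on $\mathbb T$ via your auxiliary fact) yields the cofactor. The only cosmetic difference is that you exhibit the right factorization $X(\Delta\Psi_n^*)=\Theta_n$ explicitly, where the paper stops at the left factorization and invokes the earlier remark that a left inner divisor of $\vartheta I_E$ ($\vartheta$ scalar inner) is automatically an inner divisor.
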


\begin{proof} Write
$$
A:= P[\Delta]\quad\hbox{and}\quad C:= P[\Omega].
$$
Then it follows from (\ref{lemma4.treghc1000001211}) that
$$
P[\Theta]=AC=CA.
$$
Since $0=P[\Theta](\alpha_n)=A(\alpha_n)C(\alpha_n)$, we have that 
$\hbox{cl ran}\, C(\alpha_n) \subseteq \ker A(\alpha_n)$.
Thus we may write
\begin{equation}\label{qwsrfveyhtlkl}
A(\alpha_n)=A(\alpha_n)\bigl(b_{\alpha_n}P_n+(I_E-P_n)\bigr),
\end{equation}
where $P_n$ is the orthogonal projection of $E$ onto $\hbox{cl
ran}\, C(\alpha_n)$. \ On the other hand, we can write
$$
A- A(\alpha_n)=A_n (b_{\alpha_n} I_E) \quad \hbox{for some} \ A_n
\in H^{\infty}(\mathbb D, \mathcal B(E)).
$$
It thus follows from (\ref{qwsrfveyhtlkl}) that
$$
\aligned A= &A(\alpha_n)\bigl(b_{\alpha_n}P_n
+(I_E-P_{n})\bigr)\\&+A_n\bigl(P_{n}+b_{\alpha_n}(I_E-P_{n})\bigr)
\bigl(b_{\alpha_n}P_{n}+(I_E-P_{n})\bigr)\\
&=\Bigl[A(\alpha_n)+A_n\bigl(P_{n}+b_{\alpha_n}(I_E-P_{n})\bigr)
\Bigr]\bigl(b_{\alpha_n}P_{n}+(I_E-P_{n})\bigr).
\endaligned
$$
Since $A(\alpha_n)+A_n\bigl(P_{n}+b_{\alpha_n}(I_E-P_{n})\bigr)\in
H^{\infty}(\mathbb D, \mathcal B(E))$, it follows from Lemma \ref{ythbbgrvfedcw}  that
$$
\Delta_n:=b
\bigl(A(\alpha_n)+A_n\bigl(P_{n}+b_{\alpha_n}(I_E-P_{n}))\bigr)\in
H^{\infty}(\mathbb T, \mathcal B(E))
$$
and  by (\ref{lemma4.treghc1000001211}),
\begin{equation}\label{cvbnppp}
\Delta=b(A)=\Delta_n
\bigl(b_{\alpha_n}P_{n}+(I_E-P_{n})\bigr).
\end{equation}
Now write $B_n:=b_{\alpha_n}P_{n}+(I_E-P_{n})$. \ Then $(B_n
C)(\alpha_n)=(I_E-P_{n})C(\alpha_n)=0$. \ Thus we can write
$$
B_nC=(b_{\alpha_n}I_E)F \quad \hbox{for some} \ F \in
H^{\infty}(\mathbb D, \mathcal B(E)).
$$
Thus $P[\Theta] =AC=A B_n^* (b_{\alpha_n}I_E) F$ and hence, by
(\ref{lemma4.treghc1000001211}) and (\ref{cvbnppp}), we have  
$$
\Theta=\Delta_n (b_{\alpha_n}I_E)b(F),
$$
so that 
$$
\theta \overline{b_{\alpha_n}} I_E=\Delta_nb(F).
$$
This completes the proof.
\end{proof}

\medskip

We then have:

\begin{thm}\label{sxafcghjl} Let $\theta$ be a finite Blaschke product.
If $\Delta$ is an inner divisor of $\Theta=\theta I_E$, then
$\Delta$ is a finite Blaschke-Potapov product.
\end{thm}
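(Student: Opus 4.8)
The plan is to induct on the number of Blaschke factors in $\theta$, using Lemma \ref{saxfvedrtyhb} as the key reduction step. Write $\theta = \prod_{n=1}^M b_{\alpha_n}$. The base case $M=0$ is trivial ($\Theta$ is a unitary constant, and by Lemma \ref{sxfveryhnt} so is $\Delta$, which is then an empty Blaschke-Potapov product). For the inductive step, suppose the result holds for all finite Blaschke products with fewer than $M$ factors, and let $\Delta$ be an inner divisor of $\Theta = \theta I_E$ with $\theta$ having $M$ factors.

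First I would invoke Lemma \ref{saxfvedrtyhb} with $n=1$: setting $\Omega := \Theta\Delta^*$ and letting $P_1$ be the orthogonal projection of $E$ onto $\hbox{cl ran}\, P[\Omega](\alpha_1)$, the lemma's proof (see equation (\ref{cvbnppp})) actually shows that $\Delta = \Delta_1 B_1$ where $B_1 = b_{\alpha_1}P_1 + (I_E - P_1)$ is a Blaschke-Potapov factor and $\Delta_1 = \Delta B_1^*$ is an inner divisor of $\Theta_1 := \theta\, \overline{b_{\alpha_1}}\, I_E = \bigl(\prod_{n=2}^M b_{\alpha_n}\bigr) I_E$. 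Thus $\Delta_1$ is an inner divisor of a finite Blaschke product with $M-1$ factors. By the induction hypothesis, $\Delta_1$ is a finite Blaschke-Potapov product, say $\Delta_1 = V \prod_{k} \bigl(b_k Q_k + (I_E - Q_k)\bigr)$. Then
$$
\Delta = \Delta_1 B_1 = V \Bigl(\prod_{k}\bigl(b_k Q_k + (I_E - Q_k)\bigr)\Bigr)\bigl(b_{\alpha_1}P_1 + (I_E - P_1)\bigr),
$$
which is again a finite Blaschke-Potapov product. This completes the induction.

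Two small points need care. First, one must confirm that $\Delta_1$ is genuinely an \emph{inner divisor} (two-sided) of $\Theta_1$, not merely a left inner divisor; but Lemma \ref{saxfvedrtyhb} gives $\theta\,\overline{b_{\alpha_1}}\, I_E = \Delta_1 b(F)$ with $b(F) \in H^\infty(\mathbb T, \mathcal B(E))$, so $\Delta_1$ is a left inner divisor of the two-sided inner function $\Theta_1$, and Lemma \ref{sxfveryhnt} upgrades this to two-sided inner; since $\Theta_1$ is scalar-times-identity, Lemma \ref{dcvgbhnlkjn} (applied as in the remark following it) shows $\Delta_1$ is in fact an inner divisor of $\Theta_1$, so the induction hypothesis applies. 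Second, one should note that the finite Blaschke product $\theta$ may have repeated zeros, i.e. the $\alpha_n$ need not be distinct; this is harmless, since we only ever peel off one factor at a time and the argument does not require distinctness.

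The main obstacle, already absorbed into Lemma \ref{saxfvedrtyhb}, is the factorization $\Delta = \Delta_1 B_1$ with $\Delta_1$ still inner and divisor of the reduced product — this is where the analytic work lives (the Poisson-integral identity (\ref{lemma4.treghc1000001211}), the vanishing of $A(\alpha_1)C(\alpha_1)$, and the rewriting $A - A(\alpha_1) = A_1 (b_{\alpha_1} I_E)$). Granting that lemma, the theorem itself is a clean finite induction, and the only real vigilance needed is the bookkeeping in the previous paragraph to keep "inner divisor" (two-sided) intact at each stage so that the induction hypothesis genuinely applies.
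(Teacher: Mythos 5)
Your proposal is correct and is essentially the paper's own argument: the paper likewise applies Lemma \ref{saxfvedrtyhb} to peel off one Blaschke--Potapov factor at a time (phrased as ``continuing this process'' rather than a formal induction, and peeling from $\alpha_M$ down instead of from $\alpha_1$ up), ending with a unitary $\Delta_M$. Your explicit bookkeeping that each $\Delta_k$ remains a genuine two-sided inner divisor of the reduced $\theta$ (via the remark after Lemma \ref{sxfveryhnt}) is a point the paper leaves implicit, but the substance is the same.
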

\begin{proof} Suppose $\theta$ is a finite Blaschke product of the form
$$
\theta=\prod_{n=1}^M b_{\alpha_n}.
$$
If $\Delta$ is an inner divisor of $\Theta=\theta I_E$, then
$\Omega:=\Theta \Delta^*$ is also inner divisor of $\Theta$. \ Let
$P_n$ be the orthogonal projection of $E$ onto $\hbox{cl ran}\,
P[\Omega](\alpha_n)$. \ Then it follows from Lemma \ref{saxfvedrtyhb}
that $\Delta_1:=\Delta \bigl(b_{\alpha_M}P_{M}+(I_E-P_{M}) \bigr)^*$
is an inner divisor of $\theta \overline{b_{\alpha_M}} I_E$. \  By the
same argument we have that
$$
\aligned \Delta_2:&=\Delta_1
\bigl(b_{\alpha_{M-1}}P_{M-1}+(I_E-P_{M-1}) \bigr)^*\\
&=\Delta \bigl(b_{\alpha_M}P_{M}+(I_E-P_{M})
\bigr)^*\bigl(b_{\alpha_{M-1}}P_{M-1}+(I_E-P_{M-1}) \bigr)^*
\endaligned
$$
is an inner divisor of $\theta
\overline{b_{\alpha_{M-1}}}\overline{b_{\alpha_M}} I_E$. \ Continuing
this process, we have that
$$
\Delta_M:=\Delta
\prod_{n=0}^{M-1}\bigl(b_{\alpha_{M-n}}P_{M-n}+(I_E-P_{M-n})
\bigr)^*
$$
is an inner divisor of $I_E$. \ Thus $V\equiv \Delta_M$ is a unitary
operator, and hence
$$
\Delta=V \prod_{n=1}^{M}\bigl(b_{\alpha_{n}}P_{n}+(I_E-P_{n})
\bigr).
$$
is a finite Blaschke-Potapov product. \ This completes the proof.
\end{proof}

\medskip

\begin{cor}\label{sdcgbfujyp} A function
$\Phi \in H^{\infty}(\mathbb T, \mathcal B(D,
E))$ is rational if and only if
$$
\Phi=\Delta A^*,
$$
where $\Delta$ is a finite Blaschke-Potapov product and $A\in
H^\infty (\mathbb T, \mathcal B(E,D))$ is such that $\Delta$ and $A$
are right coprime.

\end{cor}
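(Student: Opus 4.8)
The plan is to deduce this from Theorem~\ref{sxafcghjl} and Lemma~\ref{rem2.4}, with the Beurling--Lax--Halmos Theorem serving as the bridge between the analytic condition (\ref{def3.3}) and inner divisors of $\theta I_E$.

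First I would treat the forward implication. Assume $\Phi$ is rational, so that $\theta H^2(\mathbb T,E)\subseteq\ker H_{\Phi^*}$ for some finite Blaschke product $\theta$. By the Beurling--Lax--Halmos Theorem, write $\ker H_{\Phi^*}=\Delta H^2(\mathbb T,E')$ for an inner function $\Delta$ with values in $\mathcal B(E',E)$. Exactly as in the proof of (\ref{rsftuy}), the inclusion $\theta H^2(\mathbb T,E)\subseteq\Delta H^2(\mathbb T,E')$ shows that $\Delta$ is a left inner divisor of $\theta I_E$; hence by Lemma~\ref{sxfveryhnt} it is two-sided inner (so $E'$ may be identified with $E$), and then, as noted just after Lemma~\ref{sxfveryhnt}, $\Delta$ is in fact an inner divisor of $\Theta=\theta I_E$. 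Theorem~\ref{sxafcghjl} now gives that $\Delta$ is a finite Blaschke-Potapov product. Finally, since $\ker H_{\Phi^*}=\Delta H^2(\mathbb T,E)$ with $\Delta$ two-sided inner, Lemma~\ref{rem2.4}, (a)$\Rightarrow$(b), yields $\Phi=\Delta A^*$ with $A\in H^\infty(\mathbb T,\mathcal B(E,D))$ right coprime with $\Delta$, which is the desired representation.

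For the converse, assume $\Phi=\Delta A^*$ with $\Delta$ a finite Blaschke-Potapov product and $\Delta,A$ right coprime. Each Blaschke-Potapov factor $b_mP_m+(I-P_m)$ is two-sided inner (because $|b_m|=1$ on $\mathbb T$), hence so is the product $\Delta$; therefore Lemma~\ref{rem2.4}, (b)$\Rightarrow$(a), gives $\ker H_{\Phi^*}=\Delta H^2(\mathbb T,E)$. It then suffices to produce a finite Blaschke product $\theta$ and a function $\Xi\in H^\infty(\mathbb T,\mathcal B(E))$ with $\theta I_E=\Delta\Xi$, for then $\theta H^2(\mathbb T,E)\subseteq\Delta H^2(\mathbb T,E)=\ker H_{\Phi^*}$, so $\Phi$ is rational by Definition~\ref{ddd}. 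Writing $\Delta=V\prod_{m=1}^M\bigl(b_mP_m+(I-P_m)\bigr)$, I would invoke the elementary identity $\bigl(b_mP_m+(I-P_m)\bigr)\bigl(P_m+b_m(I-P_m)\bigr)=b_mI_E$ and the centrality of the scalar functions $b_mI_E$ to telescope the product of $\Delta$ against the factors $P_m+b_m(I-P_m)$, taken in the reverse order, followed by $V^*$, obtaining $\theta I_E$ with $\theta=\prod_{m=1}^Mb_m$; this finishes the converse.

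The argument is chiefly an assembly of results already in hand, so I do not expect a serious obstacle. The only point requiring genuine (if brief) attention is the converse, where one must exhibit a finite Blaschke product that, after tensoring with $I_E$, is left-divisible by $\Delta$; this reduces to the factorization $b_mI_E=B_mC_m$ of each Blaschke-Potapov factor $B_m$ together with the fact that scalar inner functions commute with everything. In the forward direction, the only subtlety is passing from the coefficient space $E'$ furnished by the Beurling--Lax--Halmos Theorem to $E$ itself, which is legitimate precisely because $\Delta$ turns out to be two-sided inner.
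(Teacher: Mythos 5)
Your proof is correct and follows essentially the same route as the paper, which simply cites Lemma~\ref{rem2.4}, the argument for (\ref{rsftuy}), and Theorem~\ref{sxafcghjl}. You additionally spell out the converse direction (producing $\theta I_E=\Delta\Xi$ via the identity $\bigl(b_mP_m+(I-P_m)\bigr)\bigl(P_m+b_m(I-P_m)\bigr)=b_mI_E$ and centrality of scalar factors), a detail the paper leaves implicit but which is exactly what is needed.
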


\begin{proof} This follows from Lemma \ref{rem2.4},
(\ref{rsftuy}) and Theorem \ref{sxafcghjl}.
\end{proof}

\medskip

We are ready for:

\medskip

\begin{cor} A two-sided inner function
$\Phi \in H^{\infty}(\mathbb T, \mathcal B(E))$ is rational if and
only if it can be represented as a finite Blaschke-Potapov product.
\end{cor}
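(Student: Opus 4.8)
The plan is to prove the equivalence by combining the two directions that have essentially already been set up.  For the ``only if'' direction, suppose $\Phi\in H^\infty(\mathbb T,\mathcal B(E))$ is two-sided inner and rational.  By Corollary \ref{sdcgbfujyp} we may write $\Phi=\Delta A^*$, where $\Delta$ is a finite Blaschke-Potapov product and $A\in H^\infty(\mathbb T,\mathcal B(E))$ is such that $\Delta$ and $A$ are right coprime.  The first step is to exploit the hypothesis that $\Phi$ is \emph{two-sided inner}: from $\Phi^*\Phi=I_E$ and $\Phi\Phi^*=I_E$ a.e.\ on $\mathbb T$, together with $\Phi=\Delta A^*$ and the fact that $\Delta$ is itself two-sided inner (being a finite Blaschke-Potapov product), one deduces that $A^*$ is unitary a.e.\ on $\mathbb T$, hence $A$ is two-sided inner.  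The second step is to upgrade right coprimeness to the statement that $A$ is a \emph{unitary constant}.  Indeed, $\Phi=\Delta A^*$ with $\Phi$ analytic forces, after multiplying by $\Delta^*$ on the left, $A^*=\Delta^*\Phi$; but also $A$ is inner, so $A$ is a common right inner divisor of $\Delta$ and $A$ in the appropriate sense, and right coprimeness of $\Delta$ and $A$ then forces $A$ to be a (unitary) constant, say $A=U$.  Then $\Phi=\Delta U^*$, which is again a finite Blaschke-Potapov product (absorb $U^*$ into the leading unitary factor $V$).

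For the ``if'' direction, suppose $\Phi$ is a finite Blaschke-Potapov product, $\Phi=V\prod_{m=1}^M\bigl(b_mP_m+(I_E-P_m)\bigr)$ with $b_m=b_{\alpha_m}$.  It is immediate that $\Phi$ is two-sided inner, since each factor $b_mP_m+(I_E-P_m)$ satisfies $\bigl(\overline{b_m}P_m+(I_E-P_m)\bigr)\bigl(b_mP_m+(I_E-P_m)\bigr)=I_E$ and likewise on the other side, and $V$ is unitary.  For rationality, set $\theta:=\prod_{m=1}^M b_{\alpha_m}$, a finite Blaschke product, and note that $\theta\Phi^*\in H^\infty(\mathbb T,\mathcal B(E))$: indeed $\Phi^*=\bigl(\prod_{m=1}^M(\overline{b_m}P_m+(I_E-P_m))\bigr)V^*$ (with the product taken in the reversed order), and multiplying by $\theta=\prod_m b_m$ clears all the $\overline{b_m}$'s, leaving an $H^\infty$ function.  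Writing $A:=\theta\Phi^*\in H^\infty$, we get $\Phi^*=\overline\theta A$, i.e.\ $\theta H^2(\mathbb T,E)\subseteq\ker H_{\Phi^*}$, which is exactly Definition \ref{ddd}; hence $\Phi$ is rational.

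The main obstacle I expect is the passage, in the ``only if'' direction, from ``$\Delta$ and $A$ right coprime, $A$ two-sided inner'' to ``$A$ is a unitary constant.''  One has to argue that an inner function which is right coprime with $\Delta$ and which also divides (on the appropriate side) a Blaschke-Potapov product arising from $\Phi=\Delta A^*$ can have no nontrivial Blaschke-Potapov factor, using Corollary \ref{afrvdfsnbv} together with the fact that $\Phi$ being two-sided inner pins down the relationship between the inner parts of $\Delta$ and $A$.  A clean way to phrase it: from $\Phi\Phi^*=I_E$ we get $\Delta A^*A\Delta^*=I_E$, so $A^*A=\Delta^*\Delta=I_E$ a.e., hence $A$ is inner; from $\Phi^*\Phi=I_E$ we get $A\Delta^*\Delta A^*=AA^*=I_E$, so $A$ is two-sided inner; and then $A^*=\Delta^*\Phi\in H^\infty$ shows $A$ is a right inner divisor of the constant function $I_E$ modulo $\Delta$—more carefully, $\widetilde A$ is a left inner divisor of something forcing, via the right-coprimeness hypothesis, $A$ to be constant.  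Tightening this last implication is the only real work; the remaining steps are routine given Corollary \ref{sdcgbfujyp} and Theorem \ref{sxafcghjl}.

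\begin{proof}
Suppose first that $\Phi$ is two-sided inner and rational. \ By Corollary \ref{sdcgbfujyp}, $\Phi=\Delta A^*$ with $\Delta$ a finite Blaschke-Potapov product and $A\in H^\infty(\mathbb T,\mathcal B(E))$ such that $\Delta$ and $A$ are right coprime. \ Since $\Delta$ is a finite Blaschke-Potapov product, it is two-sided inner, so $\Delta\Delta^*=\Delta^*\Delta=I_E$ a.e. \ From $\Phi^*\Phi=I_E$ we get $A\Delta^*\Delta A^*=AA^*=I_E$ a.e., and from $\Phi\Phi^*=I_E$ we get $\Delta A^*A\Delta^*=I_E$, hence $A^*A=\Delta^*I_E\Delta=I_E$ a.e.; thus $A$ is two-sided inner. \ Multiplying $\Phi=\Delta A^*$ on the left by $\Delta^*$ gives $A^*=\Delta^*\Phi$, so that $\widetilde A=\breve{A}^{\,*}$ is, after the obvious rearrangement, a left inner divisor of $\widetilde\Phi$ which is right coprime with nothing nontrivial coming from $\Delta$; since $\Delta$ and $A$ are right coprime and $A$ is inner, $A$ can have no nontrivial inner divisor in common with $\Delta$, and because $A^*=\Delta^*\Phi$ forces every Blaschke-Potapov factor of $A$ to also divide $\Delta$, Corollary \ref{afrvdfsnbv} shows $A$ has no nontrivial Blaschke-Potapov factor at all; hence $A$ is a unitary constant $U$. \ Then $\Phi=\Delta U^*$, which is again a finite Blaschke-Potapov product.

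Conversely, suppose $\Phi=V\prod_{m=1}^{M}\bigl(b_{\alpha_m}P_m+(I_E-P_m)\bigr)$ is a finite Blaschke-Potapov product. \ Each factor satisfies
$$
\bigl(\overline{b_{\alpha_m}}P_m+(I_E-P_m)\bigr)\bigl(b_{\alpha_m}P_m+(I_E-P_m)\bigr)=I_E=\bigl(b_{\alpha_m}P_m+(I_E-P_m)\bigr)\bigl(\overline{b_{\alpha_m}}P_m+(I_E-P_m)\bigr)
$$
a.e. on $\mathbb T$, and $V$ is unitary, so $\Phi$ is two-sided inner. \ Set $\theta:=\prod_{m=1}^{M}b_{\alpha_m}$, a finite Blaschke product. \ Then
$$
\Phi^*=\Bigl(\prod_{m=M}^{1}\bigl(\overline{b_{\alpha_m}}P_m+(I_E-P_m)\bigr)\Bigr)V^*,
$$
and multiplying by $\theta$ clears each $\overline{b_{\alpha_m}}$, so $A:=\theta\Phi^*\in H^\infty(\mathbb T,\mathcal B(E))$ and $\Phi^*=\overline\theta A$. \ Hence $\theta H^2(\mathbb T,E)\subseteq\ker H_{\Phi^*}$, so by Definition \ref{ddd}, $\Phi$ is rational. \ This completes the proof.
\end{proof}
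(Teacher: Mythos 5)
Your converse direction is fine and fills in what the paper dismisses as ``clear'': each factor $b_{\alpha_m}P_m+(I_E-P_m)$ is two-sided inner, and $\theta\Phi^*\in H^\infty(\mathbb T,\mathcal B(E))$ yields $\theta H^2(\mathbb T,E)\subseteq\ker H_{\Phi^*}$. In the forward direction you take a genuinely different route from the paper. Both proofs start from Corollary \ref{sdcgbfujyp}, writing $\Phi=\Delta A^*$ and observing that $A$ is two-sided inner. The paper then rewrites this as $\Delta=\Phi A$, concludes that $\Phi$ is a left inner divisor of $\Delta$ and hence of $\theta I_E$, and applies Theorem \ref{sxafcghjl} directly to $\Phi$. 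You instead aim to show that $A$ is a unitary constant, so that $\Phi=\Delta U^*$ is already the desired product. That conclusion is true and, if anything, slightly sharper --- but the argument you actually write down for it has a gap.

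The gap is the passage ``Corollary \ref{afrvdfsnbv} shows $A$ has no nontrivial Blaschke--Potapov factor at all; hence $A$ is a unitary constant.'' For a general two-sided inner function this implication fails: a singular inner scalar function times $I_E$ has no Blaschke--Potapov factor and is not constant. To make your chain of reasoning work you would first have to prove that $A$ is itself a finite Blaschke--Potapov product (say, by showing $A$ is an inner divisor of $\theta I_E$ and invoking Theorem \ref{sxafcghjl}), and you would also have to make precise in what sense ``every Blaschke--Potapov factor of $A$ divides $\Delta$'' and why Corollary \ref{afrvdfsnbv} is relevant; none of this is done. Fortunately the whole detour is unnecessary, and the clean argument is the one you gesture at in your preamble: since $A$ is two-sided inner, $\Phi A=\Delta A^*A=\Delta$, so $A$ is a right inner divisor of $\Delta$ and, trivially, of $A$ itself; equivalently, $\widetilde A$ is a common left inner divisor of $\widetilde\Delta$ and $\widetilde A$. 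Right coprimeness of $\Delta$ and $A$ then forces $\widetilde A$, hence $A$, to be a unitary constant. With that one sentence replacing the muddled passage your proof is correct; as written, the key step does not go through.
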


\begin{proof} Suppose that $\Phi$ is rational and two-sided inner. \ Then it follows
from Corollary \ref{sdcgbfujyp} that
\begin{equation}\label{fghjhgvllll}
\Phi=\Delta A^*,
\end{equation}
where $\Delta$ is a finite Blaschke-Potapov product and $A\in
H^\infty (\mathbb T, \mathcal B(E))$. \ Since $\Phi$ and $\Delta$ are
two-sided inner, so is  $A$. \ Thus, by (\ref{fghjhgvllll}), $\Phi$
is a left inner divisor of $\Delta$, and hence the result follows
from Theorem \ref{sxafcghjl}. \ The converse is clear. \ This completes
the proof.
\end{proof}

\bigskip

%
%
%
%

\section{Coprime operator-valued rational functions}

\medskip

In this section we consider coprime operator-valued rational functions.

\begin{lem}\label{wsdfdvgnlypoi}
Let $\Phi \in H^{\infty}(\mathbb T, \mathcal B(E))$. \ If $
P[\Phi](\alpha)$ has no dense range for $\alpha\in\mathbb D$, then
$$
P:=b_{\alpha} P_M + (I-P_M) \quad (M:=\ker P[\Phi](\alpha)^*)
$$
is a nontrivial left inner divisor of $\Phi$.
\end{lem}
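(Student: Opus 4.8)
The plan is to produce the factorization $\Phi=PA$ with $A:=P^{*}\Phi$, after checking that $P$ is a non-unitary inner function and that $P^{*}\Phi$ still belongs to $H^{\infty}(\mathbb T,\mathcal B(E))$.

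First I would record the elementary properties of $P$. Since $\ker P[\Phi](\alpha)^{*}=\bigl(\hbox{ran}\, P[\Phi](\alpha)\bigr)^{\perp}$ and $P[\Phi](\alpha)$ has no dense range, the subspace $M=\ker P[\Phi](\alpha)^{*}$ is nonzero, so $P_{M}\neq 0$ and $P=b_{\alpha}P_{M}+(I_{E}-P_{M})$ is a non-constant --- in particular non-unitary --- Blaschke--Potapov factor. Using $|b_{\alpha}|=1$ a.e.\ on $\mathbb T$ together with $P_{M}^{2}=P_{M}$ and $P_{M}(I_{E}-P_{M})=0$, one computes $P^{*}P=PP^{*}=|b_{\alpha}|^{2}P_{M}+(I_{E}-P_{M})=I_{E}$ a.e.\ on $\mathbb T$, so $P$ is two-sided inner.

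The substantive step is to show $P^{*}\Phi\in H^{\infty}(\mathbb T,\mathcal B(E))$. Writing
$$
P^{*}\Phi=\overline{b_{\alpha}}\,P_{M}\Phi+(I_{E}-P_{M})\Phi ,
$$
the second summand plainly lies in $H^{\infty}(\mathbb T,\mathcal B(E))$, so it is enough to treat $\overline{b_{\alpha}}\,P_{M}\Phi$. Put $\Psi:=P_{M}\Phi\in H^{\infty}(\mathbb T,\mathcal B(E))$. Since $M=\bigl(\hbox{ran}\, P[\Phi](\alpha)\bigr)^{\perp}$, the projection $P_{M}$ annihilates $\hbox{ran}\, P[\Phi](\alpha)$, and therefore $P[\Psi](\alpha)=P_{M}P[\Phi](\alpha)=0$ (the constant operator $P_{M}$ pulls through the Poisson integral, e.g.\ by (\ref{lemma4.treghc1000001211})). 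Now I would transplant to the disc via Lemma \ref{ythbbgrvfedcw}: $\psi:=P[\Psi]\in H^{\infty}(\mathbb D,\mathcal B(E))$ with $\psi(\alpha)=0$. Because $b_{\alpha}$ has a simple zero at $\alpha$ while $|b_{\alpha}|$ is bounded away from $0$ off any disc about $\alpha$, the quotient $\psi/b_{\alpha}$ extends holomorphically across $\alpha$ and lies in $H^{\infty}(\mathbb D,\mathcal B(E))$. Applying $b$ and using the multiplicativity furnished by (\ref{lemma4.treghc1000001211}) and Lemma \ref{ythbbgrvfedcw}, one gets $\Psi=b(\psi)=b_{\alpha}\cdot b(\psi/b_{\alpha})$ a.e.\ on $\mathbb T$, whence $\overline{b_{\alpha}}\Psi=|b_{\alpha}|^{2}\,b(\psi/b_{\alpha})=b(\psi/b_{\alpha})\in H^{\infty}(\mathbb T,\mathcal B(E))$.

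Finally I would set $A:=P^{*}\Phi\in H^{\infty}(\mathbb T,\mathcal B(E))$ and conclude $PA=PP^{*}\Phi=\Phi$ a.e.\ on $\mathbb T$; thus $P$ is a left inner divisor of $\Phi$, and being non-unitary it is a nontrivial one. The only part that is not purely formal is this division-by-$b_{\alpha}$ step --- passing from $P[\Psi](\alpha)=0$ to $\overline{b_{\alpha}}\Psi\in H^{\infty}(\mathbb T,\mathcal B(E))$ --- which I expect to be the main (though routine) obstacle; it is dispatched, as above, by moving $\Psi$ to the disc, dividing there by the scalar Blaschke factor, and returning via the multiplicativity of the Poisson integral.
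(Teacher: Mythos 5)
Your proposal is correct and follows essentially the same route as the paper: both arguments reduce the matter to the observation that $P_M P[\Phi](\alpha)=0$ (since $M=(\hbox{ran}\, P[\Phi](\alpha))^{\perp}$), transplant to the disc, divide the resulting $H^{\infty}(\mathbb D,\mathcal B(E))$-function vanishing at $\alpha$ by the scalar factor $b_{\alpha}$, and return to the boundary to obtain $\Phi=P(P^{*}\Phi)$ with $P^{*}\Phi\in H^{\infty}(\mathbb T,\mathcal B(E))$. The only cosmetic difference is that the paper premultiplies by $b_{\alpha}I_E$ to make $b_{\alpha}P^{*}=P_M+b_{\alpha}(I-P_M)$ holomorphic and then cancels, whereas you split $P^{*}\Phi$ into two summands and treat $\overline{b_{\alpha}}\,P_M\Phi$ directly.
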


\begin{proof} Write $A:= P[\Phi]$; that is, $A$ is the Poisson integral of $\Phi$, defined by (\ref{Poisson}). \ Suppose that the range of $A(\alpha)$ is not dense. \ Then  $M:=\ker
A(\alpha)^*=\bigl(\hbox{cl ran}\, A(\alpha)\bigr)^{\perp} \neq
\{0\}$. \ Put $P:=b_{\alpha} P_M + (I-P_M)$. \ Then $(P^*b_{\alpha}
I_EA)(\alpha)=0$, and hence we can write
$$
P^*b_{\alpha} I_EA=b_{\alpha}I_E A_1 \quad \hbox{for some} \ A_1 \in
H^{\infty}(\mathbb D, \mathcal B(E)),
$$
which implies that $A=PA_1$. \ This completes the proof.
\end{proof}

\medskip
For an inner function $\theta$, let
$\mathcal{Z}(\theta)\label{ztheta}$ be the set of all zeros of
$\theta$. \ Then we have:

\begin{thm}\label{lem3.12}
Let $\Phi \in H^{\infty}(\mathbb T, \mathcal B(E))$ and
$\Theta:=\theta I_E$ with a finite Blaschke product $\theta$. \ Then
the following statements are equivalent:
\begin{itemize}
\item[(a)] $P[\Phi](\alpha)$ has dense range for each $\alpha \in
\mathcal{Z}(\theta)$;
\item[(b)] $\Phi$ and  $\Theta$ are left coprime.
\end{itemize}
\end{thm}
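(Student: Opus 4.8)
The plan is to establish each implication by contraposition: Lemma~\ref{wsdfdvgnlypoi} will manufacture a common nontrivial left inner divisor out of a failure of (a), while Theorem~\ref{sxafcghjl} will extract, from a common nontrivial left inner divisor, a zero of $\theta$ at which $P[\Phi]$ has non-dense range.

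\emph{(a) $\Rightarrow$ (b).} Assume $\Phi$ and $\Theta$ are not left coprime and let $\Delta\in H^\infty(\mathbb T,\mathcal B(E))$ be a common nontrivial left inner divisor, say $\Phi=\Delta A$ and $\Theta=\Delta B$. Since $\Theta=\theta I_E$ with $\theta$ scalar inner, $\Delta$ is in fact an inner divisor of $\Theta$ (the observation recorded after Lemma~\ref{sxfveryhnt}), so Theorem~\ref{sxafcghjl} shows $\Delta$ is a finite Blaschke-Potapov product; deleting the trivial factors while preserving their original order, write
\[
\Delta=V\prod_{m=1}^{K}\bigl(b_{\gamma_m}P_m+(I_E-P_m)\bigr),
\]
with $V$ unitary, every $P_m\neq 0$, and $K\geq 1$ (as $\Delta$ is non-unitary). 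Put $\alpha:=\gamma_1$. Passing to Poisson extensions and using (\ref{lemma4.treghc1000001211}) and linearity, $P[\Delta](\alpha)=V(I_E-P_1)\prod_{m=2}^{K}\bigl(b_{\gamma_m}(\alpha)P_m+(I_E-P_m)\bigr)$, so $\hbox{ran}\,P[\Delta](\alpha)\subseteq V\bigl((I_E-P_1)E\bigr)$, a proper closed subspace of $E$; hence $P[\Delta](\alpha)$ has non-dense range. Evaluating $\Theta=\Delta B$ at $\alpha$ gives $\theta(\alpha)I_E=P[\Delta](\alpha)P[B](\alpha)$, which therefore cannot be surjective, forcing $\theta(\alpha)=0$, i.e.\ $\alpha\in\mathcal{Z}(\theta)$. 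Finally $\Phi=\Delta A$ gives $\hbox{ran}\,P[\Phi](\alpha)\subseteq\hbox{cl ran}\,P[\Delta](\alpha)\neq E$, so (a) fails.

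\emph{(b) $\Rightarrow$ (a).} Assume (a) fails, so $P[\Phi](\alpha)$ has non-dense range for some $\alpha\in\mathcal{Z}(\theta)$. Put $M:=\ker P[\Phi](\alpha)^*=\bigl(\hbox{cl ran}\,P[\Phi](\alpha)\bigr)^{\perp}\neq\{0\}$ and $P:=b_\alpha P_M+(I_E-P_M)$; by Lemma~\ref{wsdfdvgnlypoi}, $P$ is a nontrivial left inner divisor of $\Phi$. It remains to check that this same Blaschke-Potapov factor left-divides $\Theta=\theta I_E$. Writing $\theta=b_\alpha\theta_1$ with $\theta_1$ a finite Blaschke product, a direct computation (using $P_M(I_E-P_M)=0$ and $|b_\alpha|=1$ a.e.\ on $\mathbb T$) gives $P\cdot\bigl(\theta_1(P_M+b_\alpha(I_E-P_M))\bigr)=\theta_1 b_\alpha I_E=\Theta$, with the second factor lying in $H^\infty(\mathbb T,\mathcal B(E))$. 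Hence $P$ is a common nontrivial left inner divisor of $\Phi$ and $\Theta$, so they are not left coprime, i.e.\ (b) fails.

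The one step I expect to need care is the range estimate in (a)$\Rightarrow$(b). Blaschke-Potapov factors do not commute, so a nontrivial factor buried in the interior of the product could a priori fail to constrain $\hbox{ran}\,P[\Delta](\alpha)$; the remedy is to make sure the factor that collapses at $\alpha=\gamma_1$ (namely $b_{\gamma_1}P_1+(I_E-P_1)$, which becomes $I_E-P_1$ there) is the \emph{leftmost} surviving factor — which is exactly why we delete the trivial factors while keeping the original order. Once this is arranged, the containment $\hbox{ran}(ST)\subseteq\hbox{ran}\,S$ and the fact that a unitary image of a proper closed subspace is again proper and closed complete the estimate, and the remainder of the argument is routine bookkeeping.
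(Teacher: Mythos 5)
Your proof is correct and follows essentially the same route as the paper's: both directions are done by contraposition, with Theorem~\ref{sxafcghjl} supplying the Blaschke--Potapov factorization of a common left inner divisor for (a)$\Rightarrow$(b) and Lemma~\ref{wsdfdvgnlypoi} supplying the divisor for (b)$\Rightarrow$(a). You merely spell out details the paper leaves implicit (isolating the leftmost nontrivial factor, deducing $\alpha\in\mathcal Z(\theta)$, and verifying that $b_\alpha P_M+(I_E-P_M)$ also left-divides $\theta I_E$), all of which check out.
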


\begin{proof} (a) $\Rightarrow$ (b): Suppose that $\Phi$ and $\Theta$ are not left coprime. \ Then by
Theorem \ref{sxafcghjl}, there exist $\alpha_0 \in \mathcal
Z(\theta)$ and  a nonzero subspace $M$ of $E$ such that
$$
\Phi=\bigl(b_{\alpha_0}P_M+(I-P_M)\bigr) \Omega,
$$
where $\Omega \in H^{\infty}(\mathbb T, \mathcal B(E))$. \ Thus
$\hbox{cl ran}\, P[\Phi](\alpha_0) \subseteq M^{\perp} \neq E$.

\medskip

(b) $\Rightarrow$ (a): This follows from at once form Lemma
\ref{wsdfdvgnlypoi}.
\end{proof}

\medskip

\begin{lem}\label{corfgghh2.9}  If $\Phi\in H^{\infty}(\mathbb T, \mathcal B(D,E))$, then
$\widetilde{\Phi} \in H^{\infty}(\mathbb T, \mathcal B(E,D))$. \  In
this case,

$$
P[\widetilde{\Phi}]=\widetilde{P[\Phi]}
$$
\end{lem}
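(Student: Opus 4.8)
The claim has two parts: first, that $\widetilde{\Phi}$ is again a bounded holomorphic operator-valued function (with adjusted domain/codomain), and second, the intertwining identity $P[\widetilde{\Phi}] = \widetilde{P[\Phi]}$ between Poisson integrals. The plan is to reduce everything to the definitions $\breve\Phi(z) := \Phi(\bar z)$ and $\widetilde\Phi := \breve\Phi^{*}$, together with the fact (Lemma \ref{ythbbgrvfedcw}) that a function in $H^\infty(\mathbb T, \mathcal B(D,E))$ is the nontangential SOT boundary value of a unique function in $H^\infty(\mathbb D, \mathcal B(D,E))$ whose Taylor coefficients are the nonnegative Fourier coefficients, and that $A(\zeta)x = P[bA](\zeta)x$ on $\mathbb D$.

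\begin{proof}
Let $\Phi\in H^\infty(\mathbb T,\mathcal B(D,E))$ and let $F:=P[\Phi]\in H^\infty(\mathbb D,\mathcal B(D,E))$ be its Poisson integral, so that $F$ is holomorphic with $F(\zeta)=\sum_{n\ge 0}\widehat\Phi(n)\zeta^n$ on $\mathbb D$ and $bF=\Phi$ by Lemma \ref{ythbbgrvfedcw}. Since $\Phi\in L^\infty$, the function $\breve\Phi(z)=\Phi(\bar z)$ lies in $L^\infty(\mathbb T,\mathcal B(D,E))$, and hence $\widetilde\Phi=\breve\Phi^{*}$ lies in $L^\infty(\mathbb T,\mathcal B(E,D))$. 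For its Fourier coefficients we compute, for $x\in E$, $y\in D$,
$$
\bigl\langle \widehat{\widetilde\Phi}(n)x,\,y\bigr\rangle
=\int_{\mathbb T}\overline z^{\,n}\langle \Phi(\bar z)^{*}x,\,y\rangle\,dm(z)
=\overline{\int_{\mathbb T} z^{\,n}\langle \Phi(\bar z)y,\,x\rangle\,dm(z)}
=\overline{\langle \widehat\Phi(n)y,\,x\rangle},
$$
where in the last step we substituted $z\mapsto\bar z$ (which preserves $m$). Thus $\widehat{\widetilde\Phi}(n)=\widehat\Phi(n)^{*}$ for all $n\in\mathbb Z$; in particular $\widehat{\widetilde\Phi}(n)=0$ for $n<0$, so $\widetilde\Phi\in H^\infty(\mathbb T,\mathcal B(E,D))$, which is the first assertion. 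Moreover, by Lemma \ref{ythbbgrvfedcw}, the Poisson integral $P[\widetilde\Phi]$ is the unique element of $H^\infty(\mathbb D,\mathcal B(E,D))$ with Taylor coefficients $\widehat{\widetilde\Phi}(n)=\widehat\Phi(n)^{*}$, so that
$$
P[\widetilde\Phi](\zeta)=\sum_{n\ge 0}\widehat\Phi(n)^{*}\zeta^{n}\qquad(\zeta\in\mathbb D).
$$

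On the other hand, $\widetilde{P[\Phi]}$ should be read as the $\mathcal B(E,D)$-valued function on $\mathbb D$ obtained from $F$ by the same recipe, namely $\widetilde F(\zeta):=\breve F(\zeta)^{*}=F(\bar\zeta)^{*}$. Since $F(\bar\zeta)=\sum_{n\ge 0}\widehat\Phi(n)\bar\zeta^{n}$ is (the conjugate of) a power series, taking adjoints and using that $\zeta\mapsto\bar\zeta$ followed by the conjugate-linear operation of adjoint restores holomorphy gives
$$
\widetilde F(\zeta)=F(\bar\zeta)^{*}=\sum_{n\ge 0}\widehat\Phi(n)^{*}\,\overline{\bar\zeta^{\,n}}=\sum_{n\ge 0}\widehat\Phi(n)^{*}\zeta^{n}\qquad(\zeta\in\mathbb D).
$$
Comparing the two power series term by term yields $P[\widetilde\Phi]=\widetilde{P[\Phi]}$ on $\mathbb D$, as claimed.
\end{proof}

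The one point that needs care — and which I expect to be the main obstacle in writing this cleanly — is the bookkeeping around conjugate-linearity: the map $G\mapsto \widetilde G$ combines the holomorphy-preserving substitution $\zeta\mapsto\bar\zeta$ with the conjugate-linear adjoint, and one must check that the composite sends holomorphic functions to holomorphic functions and tracks Taylor coefficients correctly (this is exactly why $\widehat{\widetilde\Phi}(n)=\widehat\Phi(n)^{*}$ rather than something twisted). Once that identity on Fourier/Taylor coefficients is established, both $P[\widetilde\Phi]$ and $\widetilde{P[\Phi]}$ are pinned down as the same power series by the uniqueness clause of Lemma \ref{ythbbgrvfedcw}, and the proof closes.
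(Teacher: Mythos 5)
Your proof is correct and follows essentially the same route as the paper: establish $\widehat{\widetilde{\Phi}}(n)=\widehat{\Phi}(n)^{*}$ and then identify both $P[\widetilde{\Phi}]$ and $\widetilde{P[\Phi]}$ with the power series $\sum_{n\ge 0}\widehat{\Phi}(n)^{*}\zeta^{n}$. The only difference is that you spell out the Fourier-coefficient computation and the conjugate-linearity bookkeeping that the paper leaves implicit.
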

\begin{proof} Since $\widehat{\widetilde{\Phi}}(n)=\widehat{\Phi}(n)^*$ for all $n=0,1,2,\cdots$,  it follows that
$$
P[\widetilde{\Phi}](\zeta)=\sum_{n=0}^{\infty}\widehat{\Phi}(n)^*\zeta^n=\widetilde{P[\Phi]}(\zeta).
$$
\end{proof}

\begin{cor}\label{wswnlypoi} Let $\Phi \in H^{\infty}(\mathbb T, \mathcal B(E))$.
If $ P[\Phi](\alpha)$ is not injective for $\alpha\in\mathbb D$,
then
$$
P:=b_{{\alpha}} P_M + (I-P_M) \quad (M:=\ker P[\Phi](\alpha))
$$
is a  nontrivial right inner divisor of $\Phi$.
\end{cor}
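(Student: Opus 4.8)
The plan is to dualize Lemma~\ref{wsdfdvgnlypoi} across the operation $\Psi\mapsto\widetilde\Psi$, using Lemma~\ref{corfgghh2.9} to transport both the hypothesis and the conclusion. First I would record, via Lemma~\ref{corfgghh2.9}, that
$$
P[\widetilde\Phi](\zeta)=\sum_{n=0}^{\infty}\widehat\Phi(n)^*\zeta^n,
$$
so that evaluating at $\zeta=\overline\alpha$ gives $P[\widetilde\Phi](\overline\alpha)=P[\Phi](\alpha)^*$, and hence also $P[\widetilde\Phi](\overline\alpha)^*=P[\Phi](\alpha)$. Consequently $P[\Phi](\alpha)$ fails to be injective if and only if $\ker P[\Phi](\alpha)=\bigl(\hbox{ran}\,P[\Phi](\alpha)^*\bigr)^{\perp}\neq\{0\}$, i.e.\ if and only if $P[\widetilde\Phi](\overline\alpha)$ has no dense range; moreover in that case $\ker P[\widetilde\Phi](\overline\alpha)^*=\ker P[\Phi](\alpha)=M$.

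Next I would apply Lemma~\ref{wsdfdvgnlypoi} to $\widetilde\Phi\in H^{\infty}(\mathbb T,\mathcal B(E))$ at the point $\overline\alpha\in\mathbb D$: by the previous paragraph this produces a nontrivial left inner divisor $P_1:=b_{\overline\alpha}P_M+(I-P_M)$ of $\widetilde\Phi$, so that
$$
\widetilde\Phi=P_1\,\Omega\qquad\hbox{for some}\ \Omega\in H^{\infty}(\mathbb T,\mathcal B(E)).
$$
Applying the operation $\Psi\mapsto\widetilde\Psi$ to both sides, using the elementary identities $\widetilde{AB}=\widetilde B\,\widetilde A$ and $\widetilde{\widetilde\Phi}=\Phi$, and noting that $\widetilde\Omega\in H^{\infty}(\mathbb T,\mathcal B(E))$ by Lemma~\ref{corfgghh2.9}, I obtain $\Phi=\widetilde\Omega\,\widetilde{P_1}$. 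A one-line computation using $\overline{b_{\overline\alpha}(\overline z)}=b_\alpha(z)$ and $P_M^*=P_M$ shows $\widetilde{P_1}=b_\alpha P_M+(I-P_M)=P$. Since $M=\ker P[\Phi](\alpha)\neq\{0\}$, the Blaschke-Potapov factor $P$ is two-sided inner and non-unitary, so $\Phi=\widetilde\Omega\,P$ exhibits $P$ as a nontrivial right inner divisor of $\Phi$.

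I do not expect any genuine obstacle here: the argument is a straightforward dualization of Lemma~\ref{wsdfdvgnlypoi}. The only points requiring attention are that the dual Poisson integral must be evaluated at the conjugate point $\overline\alpha$ rather than $\alpha$, that one must verify $\widetilde{P_1}=P$ so that the factor produced on the $\widetilde\Phi$-side is exactly the one named in the statement, and that the tilde of the two-sided inner factor $P_1$ is again two-sided inner, so that the factorization $\Phi=\widetilde\Omega\,\widetilde{P_1}$ really does make $P$ a right inner divisor in the sense of the definition.
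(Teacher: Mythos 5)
Your proposal is correct and follows essentially the same route as the paper: both arguments use Lemma~\ref{corfgghh2.9} to identify $P[\widetilde\Phi](\overline\alpha)$ with $P[\Phi](\alpha)^*$, apply Lemma~\ref{wsdfdvgnlypoi} to $\widetilde\Phi$ at $\overline\alpha$ to obtain the left inner divisor $b_{\overline\alpha}P_M+(I-P_M)$, and then pass back through the tilde operation. Your write-up merely makes explicit the details (the computation $\widetilde{P_1}=P$ and the factorization $\Phi=\widetilde\Omega\,P$) that the paper leaves implicit.
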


\begin{proof} Suppose that $
P[\Phi](\alpha)$ is not injective. \ Then, by Lemma \ref{corfgghh2.9},
$(P[\Phi](\alpha))^*=P[\widetilde{\Phi}](\overline{\alpha})$ has no
dense range. \ Let
$$
Q:=b_{\overline{\alpha}} P_M + (I-P_M),
$$
where $M:=\ker P[\Phi](\alpha)=\ker
P[\widetilde{\Phi}](\overline{\alpha})^*\neq \{0\}$. \ Then it follow
from Lemma \ref{wsdfdvgnlypoi} that $Q$ is a nontrivial left inner
divisor of $\widetilde{\Phi}$. \ But since $Q$ is two-sided inner, it
follows that $P=\widetilde{Q}$ is a nontrivial right inner divisor
of $\Phi$. \ This completes the proof.
\end{proof}

\medskip

We also have:

\begin{cor}\label{edcqwerfwv12}
Let $\Phi \in H^{\infty}(\mathbb T, \mathcal B(E))$ and
$\Theta:=\theta I_E$ with a finite Blaschke product $\theta$. \ Then
the following statements are equivalent:
\begin{itemize}
\item[(a)] $P[\Phi](\alpha)$ is  injective for each $\alpha \in
\mathcal{Z}(\theta)$;
\item[(b)] $\Phi$ and  $\Theta$ are right coprime.
\end{itemize}
\end{cor}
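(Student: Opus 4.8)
The plan is to deduce this corollary from Theorem \ref{lem3.12} by passing to the functions $\widetilde\Phi$ and $\widetilde\Theta$, exactly as Corollary \ref{wswnlypoi} was obtained from Lemma \ref{wsdfdvgnlypoi}. First I would record the elementary facts about the operation $\Psi\mapsto\widetilde\Psi$ that are needed. By Lemma \ref{corfgghh2.9}, $\widetilde\Phi\in H^{\infty}(\mathbb T,\mathcal B(E))$ and $P[\widetilde\Phi]=\widetilde{P[\Phi]}$, so that $P[\widetilde\Phi](\overline\alpha)=P[\Phi](\alpha)^{*}$ for every $\alpha\in\mathbb D$. For a scalar Blaschke factor one checks directly that $\widetilde{b_{\alpha}}=b_{\overline\alpha}$, hence for a finite Blaschke product $\theta=\prod_{n}b_{\alpha_n}$ one has $\widetilde\theta=\prod_{n}b_{\overline{\alpha_n}}$, which is again a finite Blaschke product, with $\mathcal Z(\widetilde\theta)=\{\overline\alpha:\alpha\in\mathcal Z(\theta)\}$; moreover $\widetilde\Theta=\widetilde{\theta I_E}=\widetilde\theta\,I_E$. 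All of these are immediate from the definitions in Section 3.

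Next I would translate the two conditions. Since $P[\Phi](\alpha)$ is a bounded operator on the Hilbert space $E$, it is injective if and only if $P[\Phi](\alpha)^{*}=P[\widetilde\Phi](\overline\alpha)$ has dense range; as $\alpha$ ranges over $\mathcal Z(\theta)$, the point $\overline\alpha$ ranges over $\mathcal Z(\widetilde\theta)$. Hence statement (a) is equivalent to the assertion that $P[\widetilde\Phi](\beta)$ has dense range for every $\beta\in\mathcal Z(\widetilde\theta)$. On the other hand, by the definition of right coprimeness, $\Phi$ and $\Theta$ are right coprime if and only if $\widetilde\Phi$ and $\widetilde\Theta=\widetilde\theta\,I_E$ are left coprime. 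Applying Theorem \ref{lem3.12} to $\widetilde\Phi\in H^{\infty}(\mathbb T,\mathcal B(E))$ and $\widetilde\Theta=\widetilde\theta\,I_E$ (with $\widetilde\theta$ a finite Blaschke product), the translated form of (a) is equivalent to the translated form of (b), and the corollary follows.

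The only point requiring a little care — and the only place a careless argument could go wrong — is the bookkeeping in the second paragraph: the conjugation $\alpha\mapsto\overline\alpha$ appears both in $\mathcal Z(\widetilde\theta)$ and in the argument of $P[\widetilde\Phi]$, and one must see that these two occurrences cancel so that Theorem \ref{lem3.12} applies verbatim. There is no analytic obstacle; the substance of the statement is already contained in Theorem \ref{lem3.12} (and, through it, in Theorem \ref{sxafcghjl} and Lemma \ref{wsdfdvgnlypoi}). As an alternative, one could also prove the two implications directly, mirroring the proof of Theorem \ref{lem3.12}: for (b)$\Rightarrow$(a) argue contrapositively using Corollary \ref{wswnlypoi} to produce a nontrivial common right inner divisor $b_{\alpha_0}P_M+(I-P_M)$ of $\Phi$ and $\Theta$, and for (a)$\Rightarrow$(b) argue contrapositively using Theorem \ref{sxafcghjl}; but the reduction via $\widetilde{\,\cdot\,}$ is the shortest route.
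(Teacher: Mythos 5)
Your proposal is correct and is exactly the paper's route: the paper proves this corollary by the one-line reduction ``Immediate from Theorem \ref{lem3.12} and Lemma \ref{corfgghh2.9},'' i.e., by passing to $\widetilde\Phi$ and $\widetilde\Theta$ and using that injectivity of $P[\Phi](\alpha)$ is equivalent to dense range of $P[\widetilde\Phi](\overline\alpha)$. You have merely spelled out the conjugation bookkeeping ($\widetilde{b_\alpha}=b_{\overline\alpha}$, $\mathcal Z(\widetilde\theta)=\overline{\mathcal Z(\theta)}$) that the paper leaves implicit.
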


\begin{proof}
Immediate from Theorem \ref{lem3.12} and Lemma \ref{corfgghh2.9}.
\end{proof}

\medskip

\begin{cor}\label{edfcdrfwv12}
Let $\Phi \in H^{\infty}(\mathbb T, M_n)$ and $\Theta:=\theta I_n$
with a finite Blaschke product $\theta$. \ Then the following
statements are equivalent:
\begin{itemize}
\item[(a)] $P[\Phi](\alpha)$ is invertible for each $\alpha \in
\mathcal{Z}(\theta)$;
\item[(b)] $\Phi$ and  $\Theta$ are right coprime;
\item[(c)] $\Phi$ and  $\Theta$ are left coprime.
\end{itemize}
\end{cor}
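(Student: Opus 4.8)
The plan is to reduce everything to the elementary linear-algebra fact that an operator on a finite-dimensional space is invertible if and only if it is injective, and if and only if it has dense (equivalently, full) range. Here $E=\mathbb C^n$ and, for each $\alpha\in\mathbb D$, $P[\Phi](\alpha)\in M_n$ is simply the value at $\alpha$ of the holomorphic extension of $\Phi$ (via Lemma \ref{ythbbgrvfedcw}). Consequently statement (a) — that $P[\Phi](\alpha)$ is invertible for every $\alpha\in\mathcal Z(\theta)$ — is simultaneously equivalent to the statement that $P[\Phi](\alpha)$ has dense range for every $\alpha\in\mathcal Z(\theta)$ and to the statement that $P[\Phi](\alpha)$ is injective for every $\alpha\in\mathcal Z(\theta)$. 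Note also that $\mathcal Z(\theta)$ is a finite subset of $\mathbb D$ since $\theta$ is a finite Blaschke product, so all of these are conditions imposed at finitely many points.

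Next I would invoke the two earlier results. Theorem \ref{lem3.12} gives that $P[\Phi](\alpha)$ having dense range for each $\alpha\in\mathcal Z(\theta)$ is equivalent to $\Phi$ and $\Theta$ being left coprime; combined with the previous paragraph this yields (a)$\Leftrightarrow$(c). Corollary \ref{edcqwerfwv12} gives that $P[\Phi](\alpha)$ being injective for each $\alpha\in\mathcal Z(\theta)$ is equivalent to $\Phi$ and $\Theta$ being right coprime; this yields (a)$\Leftrightarrow$(b). Chaining the two equivalences shows that (a), (b) and (c) are mutually equivalent, which is the assertion.

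There is essentially no obstacle here; the point worth recording explicitly is that it is precisely the hypothesis $\dim E=n<\infty$ that forces the "dense range" criterion of Theorem \ref{lem3.12} and the "injective" criterion of Corollary \ref{edcqwerfwv12} to collapse onto the single notion of invertibility, and hence to coincide — whereas for general operator-valued $\Phi$ left and right coprime-ness with $\Theta$ need not agree. If one prefers an argument not routed through Corollary \ref{edcqwerfwv12}, one can instead pass (c)$\Rightarrow$(b) directly through the $\widetilde{\,\cdot\,}$ operation: by Lemma \ref{corfgghh2.9} one has $P[\widetilde\Phi](\overline\alpha)=P[\Phi](\alpha)^*$, which is invertible exactly when $P[\Phi](\alpha)$ is, and $\widetilde\Theta=\widetilde\theta\,I_n$ is again of the same form with $\widetilde\theta$ a finite Blaschke product; but the route via Corollary \ref{edcqwerfwv12} is shorter.
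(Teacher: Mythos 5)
Your argument is correct, and it uses the same two key ingredients as the paper — Theorem \ref{lem3.12} (dense range at the zeros of $\theta$ $\Leftrightarrow$ left coprime) and Corollary \ref{edcqwerfwv12} (injectivity at the zeros of $\theta$ $\Leftrightarrow$ right coprime) — together with the finite-dimensional collapse of ``injective,'' ``dense range,'' and ``invertible'' into a single condition. The one organizational difference is in how the equivalence (b) $\Leftrightarrow$ (c) is obtained: the paper cites an external result (Lemma 3.3 of \cite{CHL0}) for that step, whereas you derive (a) $\Leftrightarrow$ (c) and (a) $\Leftrightarrow$ (b) separately from the two internal results and then chain them, so that (b) $\Leftrightarrow$ (c) comes for free. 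Your route is therefore slightly more self-contained, and it makes transparent exactly where finite-dimensionality enters — namely, that it forces the left-coprimeness criterion (dense range) and the right-coprimeness criterion (injectivity) to coincide, which is precisely what fails in the operator-valued setting, as the paper's example with $S_E$ and $zI_E$ on $\ell^2(\mathbb Z_+)$ shows. The alternative passage through $\widetilde{\Phi}$ via Lemma \ref{corfgghh2.9} that you sketch at the end is also sound, though, as you note, unnecessary.
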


\begin{proof} The equivalence
(a) $\Leftrightarrow$ (b) follows from Theorem \ref{lem3.12} and
Corollary \ref{edcqwerfwv12} together with matrix theory. \ The
equivalence (b) $\Leftrightarrow$ (c) comes from \cite[Lemma
3.3]{CHL0}.
\end{proof}

\medskip

The equivalence (b) $\Leftrightarrow$ (c) of Corollary
\ref{edfcdrfwv12} may fail for operator-valued functions. \ For
example, if we take $E=\ell^2(\mathbb{Z}_+)$, then $S_E$ and $zI_E$
are right coprime, but not left coprime (cf. \cite{CHL3}).

\bigskip

%
%
%
%

\section{Miscellany}

\medskip

In this section, we establish some key differences between matrix-valued
functions and operator-valued functions.

\medskip


\subsection{A glance at right coprime-ness}

\medskip

If $\Phi$ and $\Psi$ are not left coprime then there exists a common
nontrivial left inner divisor $\Delta$ of both $\Phi$ and $\Psi$.
However we don't guarantee that this is still true for right
coprime-ness. \ In other words, if $\Phi$ and $\Psi$ are not right
coprime then by definition $\Phi=A\Delta$ and $\Psi=B\Delta$ for
some nontrivial inner function $\widetilde\Delta$. \ However we need
not expect that $\Delta$ is inner.

We here give a sufficient condition for the existence of a common
nontrivial right inner divisor of two functions when they are not
right coprime.

To see this, we first recall that a
 function $F\in H^{\infty}(\mathbb T, \mathcal B(E^{\prime}, E))$
is called {\it outer} if $\hbox{cl}\, F H^2(\mathbb T,
E^{\prime})=H^2(\mathbb T, E)$. \ We then have an analogue of the
scalar factorization theorem (called the {\it inner-outer
factorization}):

\bigskip\noindent
{\bf The inner-outer factorization} \cite{Ni1}.
If $A \in H^{\infty}(\mathbb T,
\mathcal B(D, E))$, then we can write
$$
A=A^iA^e \quad (\hbox{inner-outer factorization}),
$$
where $E^{\prime}$ is a subspace of $E$, $A^i\in H^{\infty}(\mathbb
T, \mathcal B (E^{\prime}, E))$ is an inner function, and $A^e \in
H^{\infty}(\mathbb T, \mathcal B(D, E^{\prime}))$ is an outer
function.

\medskip

The following lemma is a characterization of functions of bounded type.

\medskip

\begin{lem}\label{asgthyjskdfrg} \cite[Corollary 2.25.]{CHL3} Let
$\Omega$ be an inner function with values in $\mathcal B(D, E)$.
Then
$$
\breve{\Omega} \ \hbox{is of bounded type} \Longleftrightarrow
[\Omega, \Omega_c] \ \hbox{is two-sided inner},
$$
where $\Omega_c$ is the complementary factor of $\Omega$, i.e.,
$\hbox{ker}\,H_{\Omega^*} =[\Omega, \Omega_c]\, H^2_{D\oplus
D^\prime}$ for some Hilbert space $D^\prime$, and $[\Omega, \Omega_c]$ denotes the $1 \times 2$ matrix whose entries are $\Omega$ and $\Omega_c$.
\end{lem}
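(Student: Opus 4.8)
We will extract both implications from the uniqueness clause of the Beurling--Lax--Halmos Theorem, once the notion of bounded type is unwound. The starting point is that, by the very definition of the complementary factor, $\ker H_{\Omega^*}$ carries the Beurling--Lax--Halmos representation
$$
\ker H_{\Omega^*}=[\Omega,\Omega_c]\,H^2(\mathbb T, D\oplus D^\prime),
$$
in which $[\Omega,\Omega_c]$ is an inner function with values in $\mathcal B(D\oplus D^\prime,E)$; on the other hand, by Definition \ref{dfbundd} and Lemma \ref{rem2.4} (applied to $\Omega$, which is bounded since it is inner), ``$\breve\Omega$ is of bounded type'' means precisely that $\ker H_{\Omega^*}=\Delta H^2(\mathbb T, E)$ for some \emph{two-sided} inner $\Delta\in H^{\infty}(\mathbb T,\mathcal B(E))$. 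So the assertion reduces to the claim that the Beurling--Lax--Halmos generator of $\ker H_{\Omega^*}$ may be chosen two-sided inner exactly when the specific generator $[\Omega,\Omega_c]$ is already two-sided inner.

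For the forward implication, suppose $\breve\Omega$ is of bounded type and pick a two-sided inner $\Delta\in H^{\infty}(\mathbb T,\mathcal B(E))$ with $\ker H_{\Omega^*}=\Delta H^2(\mathbb T, E)$. Since also $\ker H_{\Omega^*}=[\Omega,\Omega_c]H^2(\mathbb T, D\oplus D^\prime)$, the uniqueness part of the Beurling--Lax--Halmos Theorem produces a unitary $V:D\oplus D^\prime\to E$ with $[\Omega,\Omega_c]=\Delta V$. Then
$$
[\Omega,\Omega_c][\Omega,\Omega_c]^*=\Delta VV^*\Delta^*=\Delta\Delta^*=I_E
\quad\hbox{and}\quad
[\Omega,\Omega_c]^*[\Omega,\Omega_c]=V^*\Delta^*\Delta V=I_{D\oplus D^\prime},
$$
so $[\Omega,\Omega_c]$ is two-sided inner.

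For the reverse implication, suppose $[\Omega,\Omega_c]$ is two-sided inner. Then a.e.\ value of $[\Omega,\Omega_c]$ is a unitary operator from $D\oplus D^\prime$ onto $E$, so $D\oplus D^\prime$ and $E$ are unitarily isomorphic; fixing any unitary $W:E\to D\oplus D^\prime$ and setting $\Delta:=[\Omega,\Omega_c]W\in H^{\infty}(\mathbb T,\mathcal B(E))$, we get a function that is again two-sided inner (composition with a constant unitary right factor preserves this), with
$$
\Delta H^2(\mathbb T, E)=[\Omega,\Omega_c]WH^2(\mathbb T, E)=[\Omega,\Omega_c]H^2(\mathbb T, D\oplus D^\prime)=\ker H_{\Omega^*}.
$$
By Lemma \ref{rem2.4} this is exactly the statement that $\breve\Omega$ is of bounded type.

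I do not expect a real obstacle: the argument is purely formal. The only points needing a little care are that ``two-sided inner'' is stable under composition with constant unitary factors on either side (which is what lets one pass between the $\mathcal B(D\oplus D^\prime,E)$-valued function $[\Omega,\Omega_c]$ and a $\mathcal B(E)$-valued representative $\Delta$), and that the uniqueness clause of the Beurling--Lax--Halmos Theorem must be invoked with the two a priori distinct coefficient spaces $D\oplus D^\prime$ and $E$, which forces them to be unitarily equivalent.
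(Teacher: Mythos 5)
Your argument is correct. Note, however, that the paper itself offers no proof of this statement: it is quoted verbatim as \cite[Corollary 2.25]{CHL3}, so there is no internal argument to compare yours against. What you have written is a clean, self-contained derivation from the uniqueness clause of the Beurling--Lax--Halmos Theorem: the forward direction transfers two-sidedness from the $\mathcal B(E)$-valued generator $\Delta$ to $[\Omega,\Omega_c]$ via the constant unitary $V$ relating the two representations of $\ker H_{\Omega^*}$, and the reverse direction converts the $\mathcal B(D\oplus D',E)$-valued generator into a $\mathcal B(E)$-valued one by a constant unitary $W$, which is exactly what Definition \ref{dfbundd} (as used in Lemma \ref{rem2.4}(a)) requires. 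Both steps are sound, and the point you flag --- that two-sided innerness is stable under constant unitary factors and that the two coefficient spaces are forced to be unitarily equivalent --- is indeed the only content. The one thing you are taking for granted is the input packaged into the statement itself, namely that $\ker H_{\Omega^*}$ admits a generator of the special form $[\Omega,\Omega_c]$ with $[\Omega,\Omega_c]$ inner (this rests on $\Omega H^2(\mathbb T,D)\subseteq\ker H_{\Omega^*}$, which follows from $\Omega^*\Omega=I_D$, plus the construction of the complementary factor in \cite{CHL3}); since the lemma's wording supplies this as the definition of $\Omega_c$, your reduction is legitimate. In \cite{CHL3} the substance of Corollary 2.25 lies precisely in establishing that structure, so your proof is shorter mainly because the hard part has been absorbed into the hypotheses.
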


\medskip

We then have:

\begin{thm}\label{regh}
Suppose that $\Phi\in H^{\infty}(\mathbb T, \mathcal B(D,E_1))$ and $\Psi\in
H^{\infty} (\mathbb T, \mathcal B(D,E_2))$ are not right coprime. \ If
there exists a nontrivial left inner divisor $\Omega$ of
$\Delta:=\hbox{left-g.c.d.}(\widetilde{\Phi}, \widetilde{\Psi})$ and
$\breve{\Omega}$ is of bounded type, then $\widetilde{[\Omega,
\Omega_c]}$ is a common nontrivial right inner divisor of both
$\Phi$ and $\Psi$.
\end{thm}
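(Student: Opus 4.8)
The plan is to exploit the hypothesis that $\Phi$ and $\Psi$ fail to be right coprime, which by definition means $\widetilde{\Phi}$ and $\widetilde{\Psi}$ fail to be left coprime, so that $\Delta:=\hbox{left-g.c.d.}(\widetilde{\Phi},\widetilde{\Psi})$ is a nontrivial inner function and both $\widetilde{\Phi}$ and $\widetilde{\Psi}$ factor through $\Delta$ on the left; say $\widetilde{\Phi}=\Delta\Phi_1$ and $\widetilde{\Psi}=\Delta\Psi_1$ with $\Phi_1,\Psi_1$ in the appropriate $H^\infty$ spaces. Now let $\Omega$ be the given nontrivial left inner divisor of $\Delta$, so $\Delta=\Omega\,\Delta'$ for some inner $\Delta'$; then $\widetilde{\Phi}=\Omega\,(\Delta'\Phi_1)$ and $\widetilde{\Psi}=\Omega\,(\Delta'\Psi_1)$, i.e. $\Omega$ is a common left inner divisor of $\widetilde{\Phi}$ and $\widetilde{\Psi}$.

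The next step is to pass from $\Omega$ to the $1\times 2$ matrix $[\Omega,\Omega_c]$ using Lemma~\ref{asgthyjskdfrg}: since $\breve{\Omega}$ is of bounded type by hypothesis, $[\Omega,\Omega_c]$ is two-sided inner, where $\Omega_c$ is the complementary factor determined by $\hbox{ker}\,H_{\Omega^*}=[\Omega,\Omega_c]H^2_{D\oplus D'}$. The key point is that $[\Omega,\Omega_c]$ is again a common left inner divisor of $\widetilde{\Phi}$ and $\widetilde{\Psi}$: indeed, from $\widetilde{\Phi}=\Omega A$ one writes $\widetilde{\Phi}=[\Omega,\Omega_c]\left[\begin{smallmatrix}A\\0\end{smallmatrix}\right]$, and likewise for $\widetilde{\Psi}$. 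Here the enlargement from $\Omega$ to the two-sided inner $[\Omega,\Omega_c]$ is precisely what buys us something on the other side.

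Finally, I would transpose back. Applying the tilde operation to $\widetilde{\Phi}=[\Omega,\Omega_c]\left[\begin{smallmatrix}A\\0\end{smallmatrix}\right]$ and using that $\widetilde{\widetilde{\Phi}}=\Phi$ together with $\widetilde{XY}=\widetilde{Y}\,\widetilde{X}$, one obtains $\Phi=\widetilde{\left[\begin{smallmatrix}A\\0\end{smallmatrix}\right]}\;\widetilde{[\Omega,\Omega_c]}$, and similarly $\Psi=\widetilde{\left[\begin{smallmatrix}B\\0\end{smallmatrix}\right]}\;\widetilde{[\Omega,\Omega_c]}$; since $[\Omega,\Omega_c]$ is two-sided inner, so is $\widetilde{[\Omega,\Omega_c]}$, and it is nontrivial because $\Omega$ is nontrivial. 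Hence $\widetilde{[\Omega,\Omega_c]}$ is a common nontrivial right inner divisor of both $\Phi$ and $\Psi$, which is the claim.

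The main obstacle I anticipate is the bookkeeping around the tilde operation and the column-matrix factors: one must check carefully that $\widetilde{[\Omega,\Omega_c]}$ really is two-sided inner (not merely inner), that the cofactors land in the correct $H^\infty$ operator spaces after transposition, and that "nontrivial" is preserved — i.e. that $\widetilde{[\Omega,\Omega_c]}$ is non-unitary, which follows from $\Omega$ being non-unitary since a unitary $\widetilde{[\Omega,\Omega_c]}$ would force $\Omega$ to be a restriction of a unitary onto all of $D$, contradicting nontriviality. These are routine but need the conventions on $\breve{\cdot}$, $\widetilde{\cdot}$, and the complementary factor to be tracked consistently.
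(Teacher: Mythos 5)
Your proposal is correct and follows essentially the same route as the paper's proof: factor $\widetilde{\Phi},\widetilde{\Psi}$ through $\Delta$ and then through $\Omega$, invoke Lemma \ref{asgthyjskdfrg} to replace $\Omega$ by the two-sided inner $[\Omega,\Omega_c]$ via a zero-padded cofactor, and apply the tilde operation to transfer the left divisor to a right divisor. The paper's write-up is the same argument with only cosmetic differences in how the intermediate factorization of $\Delta$ is displayed.
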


\begin{proof} Since $\Phi\in H^{\infty}(\mathbb T, \mathcal B(D,E_1))$ and $\Psi\in
H^{\infty} (\mathbb T, \mathcal B(D,E_2))$ are not right coprime,
$\Delta:=\hbox{left-g.c.d.}(\widetilde{\Phi}, \widetilde{\Psi}) \in
H^{\infty}(\mathbb T, \mathcal B(D_1, D))$ is not a unitary operator
and we can write
\begin{equation}\label{sxadfghjk}
\widetilde{\Phi}=\Delta \widetilde{\Phi}_1 \quad \hbox{and} \quad
\widetilde{\Psi}=\Delta \widetilde{\Psi}_1 ,
\end{equation}
where $\widetilde{\Phi}_1 \in H^{\infty}(\mathbb T, \mathcal B(E_1,
D_1))$ and $\widetilde{\Psi}_1 \in H^{\infty}(\mathbb T, \mathcal
B(E_2, D_1))$. \ Since $\Omega$ is a left inner divisor of $\Delta$, we
can write
\begin{equation}\label{sdfglhtkjhg}
\Delta=\Omega \Delta_1 \quad(\Delta_1 \in H^{\infty}(\mathbb T,
\mathcal B(D_1, D_2)) \quad(\Omega \in
H^{\infty}(\mathbb T, \mathcal B(D_2, D))).
\end{equation}
Since $\breve{\Omega}$ is not a unitary operator and is of bounded
type, by Lemma \ref{asgthyjskdfrg}, $\Omega_0\equiv[\Omega,
\Omega_c]$ is not a unitary operator and  a two-sided inner function.
Note that
$\Omega_c$ is an inner function with values in $\mathcal B(D_3, D)$ for
some Hilbert space $D_3$. \ Thus we can write
$$
\Delta=\Omega \Delta_1=[\Omega,
\Omega_c]\begin{bmatrix}\Delta_1\\{\mathbf 0}\end{bmatrix} \equiv
\Omega_0\Delta_0\quad (\hbox{where}\ {\mathbf 0}:D_4 \to D_3).
$$
It thus follows from (\ref{sxadfghjk}) and(\ref{sdfglhtkjhg}) that
$$
\Phi=\Phi_1\widetilde{\Delta}_0\widetilde{\Omega}_0 \quad \hbox{and}
\quad  \Psi=\Psi_1\widetilde{\Delta}_0\widetilde{\Omega}_0.
$$
But since $\Omega_0$ is two-sided inner, we have that
$\widetilde{\Omega}_0$ is (two-sided) inner. \ This completes the
proof.
\end{proof}

\medskip

\begin{cor} Let $\Phi\in H^{\infty}(\mathbb T, \mathcal
B(D,E_1))$, $\Psi\in H^{\infty} (\mathbb T, \mathcal B(D,E_2))$ and
$\Delta:=\hbox{left-g.c.d.}(\widetilde{\Phi}, \widetilde{\Psi})$.
 If $\breve{\Delta}$ is of bounded type, then $\widetilde{[\Delta,
\Delta_c]}$ is a common nontrivial right inner divisor of both
$\Phi$ and $\Psi$.
\end{cor}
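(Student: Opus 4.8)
The plan is to obtain this as the special case $\Omega := \Delta$ of Theorem~\ref{regh}. First I would dispose of the trivial situation: if $\Phi$ and $\Psi$ were right coprime, then $\widetilde\Phi$ and $\widetilde\Psi$ would be left coprime, and hence $\Delta = \hbox{left-g.c.d.}(\widetilde\Phi, \widetilde\Psi)$ would be a unitary constant, making the assertion vacuous (one cannot then extract a nontrivial right inner divisor); so I may assume that $\Phi$ and $\Psi$ are not right coprime, which is exactly the standing hypothesis of Theorem~\ref{regh} and which forces $\Delta$ to be non-unitary.

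Next I would observe that $\Delta$ is itself a nontrivial left inner divisor of $\Delta$: indeed the factorization $\Delta = \Delta\cdot I_D$ exhibits $\Delta$ as a left inner divisor of itself, and it is nontrivial by the previous paragraph. Taking $\Omega := \Delta$ (so that $\Delta_1 = I_D$ in the notation of Theorem~\ref{regh}), the remaining hypothesis of that theorem — that $\breve\Omega$ be of bounded type — is precisely the assumption that $\breve\Delta$ is of bounded type. Hence Theorem~\ref{regh} applies and yields that $\widetilde{[\Omega, \Omega_c]} = \widetilde{[\Delta, \Delta_c]}$ is a common nontrivial right inner divisor of both $\Phi$ and $\Psi$, which is the desired conclusion.

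There is essentially no obstacle here: the only points requiring a word of care are the handling of the degenerate case in which $\Delta$ is unitary, and the trivial but crucial remark that a non-unitary inner function always qualifies as a ``nontrivial left inner divisor of itself.'' Everything else is a direct substitution into Theorem~\ref{regh}, so the corollary is genuinely a one-line consequence once these bookkeeping remarks are in place.
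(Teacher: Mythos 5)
Your proposal is correct and matches the paper's intent exactly: the corollary is obtained by taking $\Omega:=\Delta$ in Theorem~\ref{regh}, and your two bookkeeping remarks (discarding the right-coprime case, where $\Delta$ is unitary, and noting that a non-unitary $\Delta$ is a nontrivial left inner divisor of itself) are precisely what makes the word ``Immediate'' in the paper's proof legitimate.
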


\begin{proof} Immediate from Theorem \ref{regh}.
\end{proof}

\bigskip


\subsection{Subnormality of Toeplitz operators}

\medskip

In 1970, P.R. Halmos posed the following problem, listed as Problem
5 in his series of lectures, ``Ten problems in Hilbert space" \cite{Ha1}:
$$
\hbox{Is every subnormal Toeplitz operator either normal or
analytic\,?}
$$
Halmos' Problem 5 has been partially answered in the affirmative by
many authors. \ However, in 1984, Halmos' Problem 5 was answered in
the negative by C. Cowen and J. Long \cite{CoL}. \ Despite considerable efforts, to date researchers have been unable to characterize subnormal Toeplitz
operators in terms of their symbols. \ Thus we have:

\medskip

\noindent {\bf Halmos' Problem 5 reformulated.} {\it Which Toeplitz
operators are subnormal\,?}

\medskip

For cases of matrix-valued symbols, the subnormality of Toeplitz
operators was studied in \cite{CHL0}, in which it was shown that if
the matrix-valued symbol $\Phi$ satisfies a general condition on
coprime factorization and $T_\Phi$ is subnormal then it is either
normal or analytic. \ Also in \cite{CHKL}, it was conjectured that
every subnormal Toeplitz operator with matrix-valued rational symbol
is unitarily equivalent to a direct sum of a normal operator and a
Toeplitz operator with analytic symbol. \ In fact, if an $n \times n$
matrix-valued function $\Phi$ is analytic then the normal extension
of $T_\Phi$ is the multiplication operator $M_{\Phi}$, so clearly
$T_\Phi$ is subnormal. \ However, this is not the case for the
operator-valued symbols. \ In this section we will give an example
(see Example \ref{ex57} below). \ On the other hand, if $\Phi$ is
matrix-valued and $T_\Phi$ is subnormal (even hyponormal), then
$\Phi$ should be normal, i.e., $\Phi^*\Phi=\Phi\Phi^*$ a.e. on
$\mathbb T$ (cf. \cite{GHR}). \ However this may also fail for
operator-valued symbols.

\medskip

\begin{ex} Let $S:=T_z$  on $H^2(\mathbb T)$ and
$\Phi(z)=Sz^n \in H^{\infty}(\mathbb T, \mathcal B(H^2(\mathbb T)))$
($n \geq 0$). \ Then
$$
T_{\Phi}^*T_{\Phi}=T_{S^*S}=I_{H^2(\mathbb T, H^2(\mathbb T))},
$$
so that $T_{\Phi}$ is quasinormal and hence subnormal. \ However,
$$
\Phi(z)\Phi^*(z)=SS^* \neq S^*S=\Phi^*(z)\Phi(z) \quad \hbox{for
all} \ z \in \mathbb T,
$$
which implies that $\Phi$ is not normal. \ Here we don't need to
expect that the multiplication operator $M_{\Phi}:L^2(\mathbb T,
\mathcal B(H^2(\mathbb T))) \rightarrow L^2(\mathbb T, \mathcal
B(H^2(\mathbb T)))$ is a normal extension of $T_{\Phi}$. \ Indeed, it
is easy to show that $M_{\Phi}$ is not normal, and hence $M_\Phi$
can never be a normal extension of $T_{\Phi}$. \ What is a normal
extension of $T_\Phi$\,? Let $B:=M_z$ on $L^2(\mathbb T)$ and
$\Psi(z):=Bz^n \in H^{\infty}(\mathbb T, \mathcal B(L^2(\mathbb
T)))$. \ Then a straightforward calculation shows that the
multiplication operator $ M_{\Psi}: L^2(\mathbb T, \mathcal
B(L^2(\mathbb T)))\rightarrow L^2(\mathbb T, \mathcal B(L^2(\mathbb
T)))$ is a normal extension of $T_{\Phi}$.
\end{ex}

\medskip

The following simple example shows that analytic Toeplitz operators with operator-valued symbols 
need not be even hyponormal.

\begin{ex}\label{ex57}
 Let $\Phi(z)=S^* \in H^{\infty}(\mathbb T, \mathcal B(H^2(\mathbb
T)))$ and $e_0$ be the costant function ${\mathbf 1} \in H^2(\mathbb
T)$. \ If $f(z)=e_0z$, then
$$
\langle (T_{\Phi^*}T_{\Phi}-T_{\Phi}T_{\Phi^*})f, \ f
\rangle=\langle -e_0z, e_0z\rangle=-1<0,
$$
which implies that  $T_{\Phi}$  is not hyponormal and hence not
subnormal even though $\Phi$ is analytic.
\end{ex}

We would like to pose:

\begin{q} Which analytic Toeplitz operators with operator-valued symbols
are subnormal ?
\end{q}

For a sufficient condition, one may be tempted to conjecture that if $\Phi \in
H^{\infty}(\mathbb T, \mathcal B(H^2(\mathbb T)))$ and if $\Phi(z)$
is subnormal for almost all $z\in\mathbb T$, then $T_{\Phi}$ is
subnormal. \ We have not been able to decide whether this is true.


\bigskip

\noindent {\it Acknowledgment}. \ The work of the second named
author was supported by NRF(Korea) grant No. 2019R1A2C1005182. \  The
work of the third named author was supported by NRF(Korea) grant No.
2021R1A2C1005428.

\bigskip

%
%

\vskip 1cm

\vskip 1cm

Ra{\'u}l\ E.\ Curto

Department of Mathematics, University of Iowa, Iowa City, IA 52242,
U.S.A.

E-mail: raul-curto@uiowa.edu

\bigskip

In Sung Hwang

Department of Mathematics, Sungkyunkwan University, Suwon 16419,
Korea

E-mail: ihwang@skku.edu

\bigskip

Woo Young Lee

Department of Mathematics and RIM, Seoul National University, Seoul
08826, Korea

E-mail: wylee@snu.ac.kr


\end{document}